\numberwithin{equation}{section}
\newcommand{\bZ}{\mathbb{Z}}
\newcommand{\bR}{\mathbb{R}}
\newcommand{\bfs}{\mathbf{s}}
\newcommand{\bft}{\mathbf{t}}
\newcommand{\cC}{\mathcal{C}}
\newcommand{\el}[1]{\left| #1 \right|}
\newcommand{\vol}[2]{\mathrm{Vol}_{#1}(#2)}
\newcommand{\Leg}[2]{\left(\frac{#1}{#2}\right)}
\newtheorem{theorem}{Theorem}[section]
\newtheorem{corollary}[theorem]{Corollary}
\newtheorem{lemma}[theorem]{Lemma}
\newtheorem{open}{Open Problem}
\newtheorem{proposition}[theorem]{Proposition}
\begin{document}
\title{Constructing $k$-radius sequences}
\author{Simon R. Blackburn and James F. McKee}
\address{Department of Mathematics\\
Royal Holloway, University of London\\
Egham, Surrey TW20 0EX\\
United Kingdom}
\email{s.blackburn@rhul.ac.uk, james.mckee@rhul.ac.uk}
\subjclass[2000]{94A55}

\begin{abstract}
An $n$-ary $k$-radius sequence is a finite sequence of elements taken from an alphabet of size $n$ such that any two distinct elements of the alphabet occur within distance $k$ of each other somewhere in the sequence. These sequences were introduced by Jaromczyk and Lonc to model a caching strategy for computing certain functions on large data sets such as medical images.  Let $f_k(n)$ be the shortest length of any $k$-radius sequence. We improve on earlier estimates for $f_k(n)$ by using tilings and logarithms.  The main result is that $f_k(n)\sim \frac{1}{k}\binom{n}{2}$ as $n\rightarrow\infty$ whenever there exists a tiling of $\bZ^{\pi(k)}$ by a certain cluster of $k$ hypercubes.  In particular this result holds for infinitely many $k$, including all $k\le 194$ and all $k$ such that $k+1$ or $2k+1$ is prime.  For certain $k$, in particular when $2k+1$ is prime, we get a sharper error term using the theory of logarithms.
\end{abstract}

\maketitle

\section{Introduction}\label{sec:introduction}

Let $k$ and $n$ be positive integers.
An $n$-ary \emph{$k$-radius sequence} is a finite sequence $a_0,a_1,\ldots ,a_{m-1}$ of elements taken from an alphabet $F=\{0,1,\ldots, n-1\}$ with the following property. For all distinct $x,y\in F$, there exist $i,j\in\{0,1,\ldots,m-1\}$ such that $a_i=x$, $a_j=y$ and $|i-j|\leq k$.  In other words, any two distinct elements of $F$ occur within a distance of $k$ of each other somewhere in the sequence. We may think of a window of length $k+1$ sliding along the sequence; the condition of being a $k$-radius sequence says that we will always be able to find a window containing both $x$ and~$y$, whatever our choice of $x$ and $y$ may be. For example, a $5$-ary $2$-radius sequence of length $7$ is:
\[
0,\,1,\,2,\,3,\,4,\,0,\,1\,.
\]
We write $f_k(n)$ for the shortest length $m$ of an $n$-ary $k$-radius sequence. The example above shows that $f_2(5)\leq 7$ (and in fact $f_2(5)=7$).

The notion of a $k$-radius sequence was introduced by Jaromczyk and Lonc~\cite{JL}. Such sequences describe a First-In First-Out caching strategy for computing functions that require computations on all pairs taken from a set of $n$ large objects (such as medical images), where at most $k+1$ objects are cached at any one time.
We believe that the problem of constructing short $k$-radius sequences is interesting as pure combinatorics; this paper establishes surprising (to us) connections to the theory of tilings and logarithms, and to properties of cyclotomic fields.

Let $k$ be fixed. This paper is concerned with establishing the asymptotic growth of $f_k(n)$. It is obvious that $f_k(n)\leq 2\binom{n}{2}$, as the concatenation of all length $2$ sequences $x,y$ with $0\leq x<y<n$ is a $k$-radius sequence. Moreover (see Section~\ref{sec:preliminaries}) it is not difficult to show that
$\frac{1}{k}\binom{n}{2}\leq f_k(n)$. So $f_k(n)$ seems to grow like $\binom{n}{2}$, and we would like to determine the limit $\lim_{n\rightarrow\infty}f_k(n)/\binom{n}{2}$ (if it exists).

Ghosh~\cite{Ghosh} proved that
\begin{equation}\label{E:Ghosh}
f_1(n)=\left\{\begin{array}{ll}
\displaystyle\binom{n}{2}+1&\text{ when $n$ is odd},\\
\\
\displaystyle\binom{n}{2}+
n/2&\text{ when $n$ is even.}
\end{array}\right.
\end{equation}
Jaromczyk and Lonc~\cite{JL} showed that
\[
f_2(n)=\frac{1}{2}\binom{n}{2}+O(n^2/\log n).
\]
Chee, Ling, Tan and Zhang~\cite{CheeLing} observed that the explicit upper bound derived by Jaromczyk and Lonc is poor for small parameter sets, and provide good constructions of $2$-radius sequences when $n$ is small. We modify the argument of Lonc and Jaromczyk to improve the error term in their estimates (see Section~\ref{sec:2_radius}), showing that
\begin{equation}\label{E:f2n}
f_2(n)=\frac{1}{2}\binom{n}{2}+O(n^{1.525}).
\end{equation}
Moreover, we show that there are infinitely many values of $n$ such that $f_2(n)= \frac{1}{2}\binom{n}{2}+O(n)$.

Turning to the case when $k>2$, Jaromczyk and Lonc~\cite{JL} show that
\[
f_k(n)\leq \frac{1}{2\lfloor (k+1)/3\rfloor}\binom{n}{2}\bigl(1+o(1)\bigr).
\]
We show that
\begin{equation}\label{E:fkn}
f_k(n)=\frac{1}{k}\binom{n}{2}+O(n^2/\log n)
\end{equation}
whenever there exists a $\bZ$-tiling of $\bZ^{\pi(k)}$ by copies of a certain fixed cluster of $k$ hypercubes (Theorem \ref{thm:k_radius}). In particular, the result holds for all $k\leq 204$ except possibly $k=195$, and the result holds whenever at least one of $k+1$ or $2k+1$ is prime. For many of these values of $k$ (including all values of $k$ with $2k+1$ prime, or with $k+1$ prime and $8|k$) we are able to improve the error term in \eqref{E:fkn}; moreover for these values of $k$ we show that there are infinitely many values $n$ such that
\begin{equation}\label{E:fkp}
f_k(n)=\frac{1}{k}\binom{n}{2}+O(n).
\end{equation}

The structure of the remainder of the paper is as follows. In \S\ref{sec:preliminaries} we establish simple lower bounds for $f_k(n)$, and recall some of the number-theoretic tools that we shall need later.  We then describe a construction method (generalising that of \cite{JL}) that for a prime number $p$ produces $p$-ary $k$-radius sequences via coverings of $\mathbb{Z}_p^*$  by arithmetic progressions.  We use this to produce a tight upper bound for $f_2(p)$ (Theorem \ref{thm:2_radius}), and deduce (\ref{E:f2n}).

In \S\ref{sec:logs_and_tilings} we recall and develop the theory of \emph{logarithms} (bijections from $\{1,\dots,k\}$ to $\mathbb{Z}_k$ such that $f(ab)=f(a)+f(b)$ whenever all three function values are defined).  We introduce the notion of a \emph{special KM-logarithm} which imposes precisely the right parity conditions that we need for a later construction of $k$-radius sequences.  We explain the relationship between logarithms and certain tilings of $\mathbb{Z}^r$ (Theorem~\ref{thm:tiling}).  We prove our main result (Theorem~\ref{thm:k_radius}) in \S\ref{sec:k_radius}, that the existence of a suitable tiling implies the estimate (\ref{E:fkn}); in particular, the existence of a logarithm of length $k$ is enough for this estimate to hold.

In \S\ref{sec:k_radius_primes} we define the notion of a $k$-radius prime, and show that if $n$ is a $k$-radius prime then we get the estimate~\eqref{E:fkp} for $f_k(n)$ with best-possible main term and a good error term (Proposition~\ref{prop:k_radius_prime_construction}).  We establish a link with logarithms by showing that $k$-radius primes exist precisely when there is a special KM-logarithm of length~$k$ (Theorem \ref{thm:stronglog_iff_k_radius}).  In \S\ref{sec:density} we derive the density of $k$-radius primes for any fixed~$k$, and use this result to improve the error term in our estimate~\eqref{E:fkn} for those values of $k$ where a special KM-logarithm of length $k$ exists. In \S\ref{sec:computations} we illustrate the applicability of our constructions by computing lists of all $k$ up to $300$ for which there exist logarithms or special KM-logarithms. Finally, in \S\ref{sec:open}, we discuss some of the various open problems that remain.

\section{Preliminaries}
\label{sec:preliminaries}

This section contains some elementary lemmas on $k$-radius sequences, and briefly describes some of the notation and results from number theory we will use.

\subsection{Elementary results on $k$-radius sequences}

\begin{lemma}
\label{lem:parameter_change}
Let $k$, $n$ and $x$ be positive integers. Then
\begin{itemize}
\item[(i)] $f_{k+1}(n)\leq f_k(n)$\emph{;}
\item[(ii)] $f_k(n)\leq f_k(n+x)$.
\end{itemize}
Indeed, the inequality \emph{(ii)} may be improved to
\[
f_k(n)\leq f_k(n+x)-\bigl\lceil \bigl(x/(n+x)\bigr)f_k(n+x)\bigr\rceil\leq \bigl(1-x/(n+x)\bigr)f_k(n+x).
\]
\end{lemma}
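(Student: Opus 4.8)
Parts (i) and (ii) are straightforward. For (i): given a $k$-radius sequence of length $f_k(n)$, observe that it is automatically a $(k+1)$-radius sequence, since any two elements within distance $k$ are within distance $k+1$; hence $f_{k+1}(n)\le f_k(n)$. For the basic version of (ii): given an $n$-ary sequence and any injection of $\{0,1,\dots,n-1\}$ into $\{0,1,\dots,n+x-1\}$, a $k$-radius sequence for $n+x$ symbols remains $k$-radius when we restrict attention to the $n$ symbols in the image; deleting (or rather, the sequence already witnesses all pairs among those $n$ symbols), we conclude $f_k(n)\le f_k(n+x)$. So the content of the lemma is the refined inequality.

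For the refined bound, the plan is an averaging/pigeonhole argument. Let $S$ be an optimal $(n+x)$-ary $k$-radius sequence, so $S$ has length $f_k(n+x)$. For each symbol $a\in\{0,1,\dots,n+x-1\}$, let $c(a)$ be the number of positions $i$ with $a_i=a$. Then $\sum_a c(a)=f_k(n+x)$. Now choose a subset $T$ of symbols of size $x$ to delete; I want to pick $T$ so that the total number of positions occupied by symbols in $T$ is large. By averaging, there is a choice of $T$ of size $x$ with $\sum_{a\in T}c(a)\ge \lceil (x/(n+x))f_k(n+x)\rceil$ — indeed the $x$ symbols of largest multiplicity achieve at least the average value $(x/(n+x))f_k(n+x)$ of a random $x$-subset, and since the count is an integer we may take the ceiling. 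Deleting all occurrences of the symbols in $T$ from $S$ (simply removing those terms and closing up the gaps) yields a sequence $S'$ on the remaining $n$ symbols. I must check that $S'$ is still a $k$-radius sequence: if $x,y$ are two surviving symbols, they occurred within distance $k$ somewhere in $S$, and deleting other terms only decreases distances, so they still occur within distance $k$ in $S'$. Hence $f_k(n)\le \mathrm{length}(S') = f_k(n+x)-\sum_{a\in T}c(a)\le f_k(n+x)-\lceil (x/(n+x))f_k(n+x)\rceil$, which is the first claimed inequality; the second follows since $\lceil t\rceil\ge t$.

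The one point requiring a little care — really the only obstacle — is the averaging step: one must confirm that an $x$-subset with sum at least the rounded-up average genuinely exists. This is immediate because if one orders the symbols by decreasing multiplicity and takes the first $x$, their average multiplicity is at least the overall average $f_k(n+x)/(n+x)$, so their sum is at least $x f_k(n+x)/(n+x)$; being an integer, it is at least $\lceil x f_k(n+x)/(n+x)\rceil$. Everything else is bookkeeping: the deletion operation clearly preserves the $k$-radius property because distances between retained terms can only shrink, and the length of $S'$ is exactly $f_k(n+x)$ minus the number of deleted terms.
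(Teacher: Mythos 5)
Your proof is correct and follows essentially the same route as the paper's: delete all occurrences of the $x$ most common symbols from an optimal $(n+x)$-ary sequence, note that these account for at least the (integer, hence rounded-up) average $\lceil (x/(n+x))f_k(n+x)\rceil$ positions, and observe that deletion only shrinks distances so the $k$-radius property survives. You simply spell out the averaging and deletion-preservation steps that the paper leaves implicit.
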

\begin{proof}
The first inequality of the lemma follows since every $k$-radius sequence is a $(k+1)$-radius sequence.

To establish the final inequality of the lemma, note that we may construct an $n$-ary $k$-radius sequence by taking the shortest $(n+x)$-ary $k$-radius sequence and deleting every occurrence of the $x$ most common of the $n+x$ alphabet symbols. The $x$ most common symbols together occur at least $\lceil (x/(n+x))f_k(n+x)\rceil$ times, and so the final statement of the lemma follows.
\end{proof}


\begin{lemma}[Jaromczyk and Lonc, \cite{JL}]
\label{lem:lower_bound}
For any positive integers $k$ and $n$,
\[
\frac{1}{k}\binom{n}{2}< f_k(n).
\]
\end{lemma}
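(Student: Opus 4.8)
The plan is to prove the bound by a direct double-counting argument on a shortest $k$-radius sequence. Let $a_0,a_1,\dots,a_{m-1}$ be an $n$-ary $k$-radius sequence of minimum length $m=f_k(n)$. For each position $j$ with $1\le j\le m-1$ I would introduce the set
\[
P_j=\bigl\{\,\{a_i,a_j\}\ :\ \max(0,j-k)\le i<j,\ a_i\ne a_j\,\bigr\}
\]
of unordered pairs of distinct alphabet symbols obtained by taking $a_j$ together with one of the (at most $k$) symbols occurring in the positions immediately preceding $j$. Note $P_0=\emptyset$, and since there are at most $\min(j,k)\le k$ integers $i$ in the range $\max(0,j-k)\le i<j$, we have $|P_j|\le k$ for every $j$.

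The key step is to observe that the defining property of a $k$-radius sequence is exactly what forces every one of the $\binom{n}{2}$ pairs of distinct alphabet symbols to lie in some $P_j$: given distinct $x,y$, choose a witnessing pair of positions $i<j$ with $\{a_i,a_j\}=\{x,y\}$ and $j-i\le k$, and then $\{x,y\}\in P_j$. Granting this, the conclusion follows at once, since
\[
\binom{n}{2}\le\Bigl|\bigcup_{j=1}^{m-1}P_j\Bigr|\le\sum_{j=1}^{m-1}|P_j|\le k(m-1),
\]
whence $m\ge 1+\tfrac1k\binom{n}{2}>\tfrac1k\binom{n}{2}$, which is the claimed strict inequality.

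Honestly there is no serious obstacle here; the one thing to be careful about is to resist the cruder approach of counting pairs "per window" of $k+1$ consecutive symbols, which only yields $\binom{n}{2}\le(m-k)\binom{k+1}{2}$ and hence a bound of the weak order $\binom{n}{2}/k^2$. Charging each covered pair to the \emph{later} of its two witnessing positions $j$, so that the pair is pinned down by a single free index $i$ ranging over a window of length only $k$, is precisely what produces the factor $1/k$ rather than $1/k^2$. The remaining points — that $P_0$ is empty, that repeated symbols in a window or the case $j<k$ only make $|P_j|$ smaller, and that the $"+1"$ makes the inequality strict — are routine.
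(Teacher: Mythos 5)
Your proof is correct and is essentially the paper's own argument: both count the ordered pairs of positions that can witness the $k$-radius condition and bound this count by $k(m-1)<km$, the only cosmetic difference being that you anchor each pair at the later witnessing position while the paper fixes the earlier one and counts the at most $k$ later ones. Nothing further is needed.
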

\begin{proof}
Let $a_1,a_2,\ldots ,a_m$ be a $k$-radius sequence. The $k$-radius condition implies that every unordered pair of distinct alphabet symbols must occur at positions $i$ and $j$ in the sequence, where $1\leq i<j\leq m$ and $|i-j|\leq k$. The number of positions $i,j$ where $1\leq i<j\leq m$ and $|i-j|\leq k$ is strictly less than $km$, since there are $m-1$ choices for $i$ and once $i$ is fixed there at most $k$ choices for $j$. Hence $km> \binom{n}{2}$, as required.
\end{proof}
We remark that Jaromzcyk and Lonc~\cite{JL} show that the linear term of this bound can be improved, by considering the elements at the start and end of the sequence in more detail; they use this technique to give an explicit improved lower bound in the case when $k=2$.

\subsection{Some number theory}

We use various pieces of standard terminology and notation from number theory.  We use $\mathbb{Z}_n$ to denote the ring of integers modulo $n$, represented by the elements $0$, $1$, \dots, $n-1$.  The multiplicative group of invertible elements we denote $\mathbb{Z}_n^*$, its order being given by the Euler totient function $\varphi(n)$.  The letter $p$ will always indicate a prime number.  
We use $\left(\frac{a}{p}\right)$ for the Legendre symbol.  A natural number $n$ is said to be $y$-smooth if each prime $p$ dividing $n$ satisfies $p\le y$.  The function $\pi(a)$ counts the number of primes in the interval $[1,a]$, and $\omega(n)$ denotes the number of distinct prime factors of $n$.

The following theorem will be useful in moving from estimates valid for primes~$p$ to general estimates for natural numbers $n$.
\begin{theorem}[Harman, \cite{Harman}, Theorem~10.8]
\label{thm:harman}
Let $a$ and $s$ be coprime positive integers. Then for all sufficiently large $n$, there exists a prime $p$ such that $p\equiv a\bmod s$ and $n\leq p\leq n+n^{0.525}$.
\end{theorem}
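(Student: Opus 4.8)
The statement is the short-interval prime number theorem in arithmetic progressions of Baker, Harman and Pintz, specialised to a \emph{fixed} modulus $s$; since the paper quotes it from Harman's book, the plan below sketches how such a result is proved rather than giving a self-contained argument. Write $x$ for the quantity called $n$ in the statement. First I would reduce to a lower bound for a weighted prime count: with $\psi(x;s,a)=\sum_{m\le x,\ m\equiv a\bmod s}\Lambda(m)$ and $y=x^{0.525}$, it suffices to prove
\[
\psi(x+y;s,a)-\psi(x;s,a)\;\gg\;\frac{y}{\varphi(s)}\,,
\]
because prime powers contribute only $o(y)$ to the left-hand side, so an interval of length $y$ carrying $\gg y$ worth of $\Lambda$ must contain a genuine prime once $x$ is large. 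Expanding the congruence condition by orthogonality of Dirichlet characters modulo $s$, the principal character gives the main term $y/\varphi(s)$, and the task becomes to show that each non-principal $\chi\bmod s$ contributes $o(y/\varphi(s))$.

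The classical route uses the truncated explicit formula: for each $\chi\bmod s$,
\[
\psi(x+y;\chi)-\psi(x;\chi)=[\chi=\chi_0]\,y-\sum_{|\gamma|\le T}\frac{(x+y)^\rho-x^\rho}{\rho}+O\!\left(\frac{x}{T}(\log x)^2\right),
\]
the sum running over non-trivial zeros $\rho=\beta+i\gamma$ of $L(s,\chi)$; bounding the zero-sum crudely leaves an error of order $y\sum_{|\gamma|\le T}x^{\beta-1}+xT^{-1}(\log x)^2$, so one needs zeros with $\beta$ near $1$ to be scarce. Here the hypothesis that $s$ is \emph{fixed} is what makes this feasible: the conductor of $L(s,\chi)$ is then bounded, and the relevant zero-density estimates (a Huxley-type bound $N(\sigma,T,\chi)\ll T^{\frac{12}{5}(1-\sigma)+\varepsilon}$, uniform over $\chi$ of bounded conductor) are exactly those known for the Riemann zeta-function. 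This input on its own yields the statement with $0.525$ replaced by $\tfrac{7}{12}+\varepsilon$, which is not quite strong enough.

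To reach the exponent $0.525$ one must instead run the sieve argument of Baker, Harman and Pintz. One decomposes the indicator of the primes in $(x,x+y]$, via a combinatorial (Heath-Brown or Vaughan) identity, into Type I sums $\sum_m a_m\sum_n 1$ with $m$ in a restricted range together with Type II bilinear sums $\sum_m\sum_n a_mb_n$, all localised to $mn\in(x,x+y]$ and to the residue class modulo $s$. The Type I sums and the ``easy'' Type II ranges are handled by standard exponential-sum and mean-value input (Weyl and van der Corput estimates, the large sieve, Watt's mean value theorem), dragging the bounded modulus $s$ along at essentially no cost; Harman's sieve method then extracts a positive lower bound for the genuine prime count by discarding the remaining ``hard'' bilinear pieces with the correct sign and a loss strictly below the main term. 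The main obstacle, and the reason the exponent lands at $0.525$, is the estimation of the Type II sums in the critical range where the shorter variable has length about $x^{1/2}$: this requires the sharpest available exponential-sum bounds together with a careful optimisation of the sieve weights. Checking that each of these estimates is uniform over the finitely many characters modulo the fixed $s$ is routine, so the whole difficulty of the theorem is inherited from the corresponding short-interval problem for $\zeta$ itself.
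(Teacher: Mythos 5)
The paper does not prove this theorem: it is imported verbatim from Harman's book (Theorem~10.8 there), so there is no in-paper argument to compare yours against. Your sketch is an accurate account of how the result is actually established in the literature --- the reduction to a lower bound for $\psi(x+y;s,a)-\psi(x;s,a)$, the observation that the classical explicit-formula/zero-density route (Huxley's $T^{\frac{12}{5}(1-\sigma)+\varepsilon}$ bound) only reaches exponent $\tfrac{7}{12}+\varepsilon$, and the need for the Baker--Harman--Pintz sieve decomposition with Watt's mean value theorem to get $0.525$, with the fixed modulus $s$ carried along at no cost --- and you are right that a self-contained proof is far beyond what could reasonably be reproduced here. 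The only blemish is cosmetic: you use $s$ both for the modulus and for the complex variable in $L(s,\chi)$.
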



\section{A construction}
\label{sec:construction}

In this section, we describe a construction method that we shall use to derive our asymptotic results. The method is a straightforward generalisation of a construction of Jaromczyk and Lonc~\cite{JL}.

Let $p$ be a prime number. For an element $d\in\bZ_p^*$, we define the set $B_{k,p}(d)\subseteq \bZ_p^*$ by
\begin{align*}
B_{k,p}(d)&=d\{\pm 1, \pm 2, \ldots, \pm k\}\\
&= \{-kd,-(k-1)d,\ldots,-2d,-d,d,2d,\ldots,(k-1)d,kd\}.
\end{align*}

For any $d\in \bZ_p^*$, let $\bfs_d$ be the periodic sequence $(0,d,2d,\ldots,(p-1)d,0,d,\dots)$ of period~$p$. Note that distinct elements $x,y\in \bZ_p$ appear at distance $k$ or less somewhere within $\bfs_d$ if and only if $y-x\in B_{k,p}(d)$.

\begin{theorem}
\label{thm:main_construction}
Let $D\subseteq \bZ_p^*$ have the property that the sets $B_{k,p}(d)$ where $d\in D$ cover $\bZ^*_p$. Then there exists a $p$-ary $k$-radius sequence of length $|D|(p+k-1)+1$.
\end{theorem}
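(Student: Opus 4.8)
The plan is to form the $p$-ary $k$-radius sequence by gluing together, with one-symbol overlaps, a single finite window of each periodic sequence $\bfs_d$, $d\in D$, where the windows are positioned so that consecutive overlaps are consistent. The remark in the excerpt --- that $x,y\in\bZ_p$ occur within distance $k$ somewhere in $\bfs_d$ precisely when $y-x\in B_{k,p}(d)$ --- is the engine; the work is to realise this inside a \emph{finite} segment, and to do so without spending length $p+k$ for each element of $D$.

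The first, and main, step is a local claim: for every integer $t$, the length-$(p+k)$ segment $W_{d,t}$ of $\bfs_d$ occupying indices $t,t+1,\dots,t+p+k-1$ contains, for each pair $x,y$ with $y-x\in B_{k,p}(d)$, an occurrence of $x$ and one of $y$ at distance at most $k$. To see this, note that the sub-segment at indices $t,\dots,t+p-1$ is a full period, hence contains each element of $\bZ_p$ exactly once. After interchanging $x$ and $y$ if necessary, write $y-x=\ell d$ with $1\le\ell\le k$, and let $a\in\{t,\dots,t+p-1\}$ be the index carrying $x$. Then index $a+\ell$ carries $x+\ell d=y$, and $a+\ell\le (t+p-1)+k=t+(p+k)-1$, so $a+\ell$ still lies in $W_{d,t}$; the indices $a$ and $a+\ell$ are at distance $\ell\le k$, which proves the claim. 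This is tight: a window of length only $p+k-1$ would just fail to contain index $a+\ell$ in the extremal case $a=t+p-1$, $\ell=k$, so the length $p+k$ cannot be reduced here.

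Next I would arrange the overlaps. Enumerate $D=\{d_1,\dots,d_m\}$, put $t_1=0$, and for $i=1,\dots,m-1$ choose $t_{i+1}\in\bZ_p$ so that the final entry of $W_{d_i,t_i}$, namely $(t_i+p+k-1)d_i$, equals the initial entry of $W_{d_{i+1},t_{i+1}}$, namely $t_{i+1}d_{i+1}$; this amounts to solving the congruence $t_{i+1}d_{i+1}\equiv(t_i+k-1)d_i\pmod p$, which is possible since $d_{i+1}$ is invertible modulo $p$. Form $\bft$ by concatenating $W_{d_1,t_1},\dots,W_{d_m,t_m}$, identifying the last symbol of each window with the (equal) first symbol of the next. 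Each window has length $p+k$ and there are $m-1$ identifications, so $\bft$ has length $m(p+k)-(m-1)=|D|(p+k-1)+1$. Moreover the symbols of each $W_{d_i,t_i}$ occur in $\bft$ contiguously and in their original order, so two symbols at distance at most $k$ within a window are still at distance at most $k$ in $\bft$.

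Finally, $\bft$ is a $k$-radius sequence: for distinct $x,y\in\bZ_p$ the difference $y-x$ is nonzero, hence a unit because $p$ is prime, hence lies in some $B_{k,p}(d_i)$ by the covering hypothesis, so by the local claim $x$ and $y$ occur within distance $k$ inside $W_{d_i,t_i}$, and therefore within $\bft$. I expect essentially all the content to be in the local claim --- recognising that $p+k$, not $p+k-1$, is the right window length and that the wrapped-around copy of $y$ does the job --- after which the overlapping construction and the length count are bookkeeping.
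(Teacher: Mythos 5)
Your proof is correct and follows essentially the same route as the paper's: take a length-$(p+k)$ window of each periodic sequence $\bfs_d$ for $d\in D$, overlap consecutive windows in a single shared symbol, and count. The only difference is that you spell out the verification that each window realises all pairs $x,y$ with $y-x\in B_{k,p}(d)$ within distance $k$, a step the paper simply asserts.
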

\begin{proof}
Let $D=\{d_1,d_2,\ldots,d_u\}$, where $u=|D|$. Let $\bft_i$ be a finite sequence of length $p+k$ consisting of $p+k$ consecutive terms of the periodic sequence $\bfs_{d_i}$. For $i\geq 2$, we choose the first element of $\bft_i$ to be equal to the final element of $\bft_{i-1}$. Note that the sequence $\bft_i$ contains all pairs $(x,y)$ with $y-x\in B_{k,p}(d_i)$. The sequence $\bft_1\bft_2\cdots\bft_u$ of length $u(p+k)$ produced by concatenating the sequences $\bft_i$ is therefore a $k$-radius sequence, by the `covering' property of $D$. But the final element of each sequence $\bft_i$ is equal to the first element of the sequence $\bft_{i+1}$, by our choice of the shift of $\bfs_{d_i}$ for $i\geq 2$. We may always replace any repeats in a $k$-radius sequence by single occurrences, to produce a shorter $k$-radius sequence. There are $u-1$ places where we may do this in our construction, so we have a $k$-radius sequence of length $u(p+k-1)+1$, as required.
\end{proof}

\section{2-radius sequences}
\label{sec:2_radius}

In this section, we establish bounds on $f_2(n)$. Our methods are similar to those of Jaromczyk and Lonc~\cite{JL}, but our slightly different approach gives a better error term than theirs. This section is a good introduction to our more general methods in later sections of the paper.

\begin{theorem}
\label{thm:2_radius}
Let $p$ be an odd prime number, and let $\ell$ be the multiplicative order of $2$ modulo $p$. Then
\[
f_2(p)\leq \left\{\begin{array}{cl}
\frac{1}{4}(p+1)(p-1)(1+\tfrac{1}{\ell})+1&\text{ if $\ell$ is odd};\\
\\
\frac{1}{4}(p+1)(p-1)(1+\tfrac{2}{\ell})+1&\text{ if $\ell\equiv 2\bmod 4$};\\
\\
\frac{1}{4}(p+1)(p-1)+1&\text{ if $\ell\equiv 0\bmod 4 $}.
\end{array}\right.
\]
\end{theorem}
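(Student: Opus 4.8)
The plan is to invoke Theorem~\ref{thm:main_construction} with $k=2$: it suffices to produce a set $D\subseteq\bZ_p^*$ such that the sets $B_{2,p}(d)=d\{\pm1,\pm2\}$, $d\in D$, cover $\bZ_p^*$ and such that $|D|$ equals $\tfrac14(p-1)\bigl(1+\tfrac{1}{\ell}\bigr)$, $\tfrac14(p-1)\bigl(1+\tfrac{2}{\ell}\bigr)$, or $\tfrac14(p-1)$ in the three respective cases; the resulting sequence then has length $|D|(p+1)+1$, which is precisely the claimed bound.

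The first step is to pass to a quotient. Since $-B_{2,p}(d)=B_{2,p}(d)$, each $B_{2,p}(d)$ is a union of cosets of $\{\pm1\}$, so the covering condition on $D$ is equivalent to requiring that the images of the $B_{2,p}(d)$ cover the cyclic group $Q=\bZ_p^*/\{\pm1\}$, which has order $m=(p-1)/2$. Writing $\theta\in Q$ for the image of $2$, the image of $B_{2,p}(d)$ in $Q$ is the translate $\bar d\{1,\theta\}$. Next I would identify $e$, the order of $\theta$ in $Q$, in terms of $\ell$: the image of $2^j$ is trivial in $Q$ exactly when $2^j=\pm1$, and because $\bZ_p^*$ is cyclic its unique involution is $-1$, which lies in $\langle 2\rangle$ (and equals $2^{\ell/2}$) precisely when $\ell$ is even. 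Hence $e=\ell$ when $\ell$ is odd and $e=\ell/2$ when $\ell$ is even; in particular $e$ is odd in the first two cases of the theorem and even in the third.

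It then remains to cover the cyclic group $Q$ by translates of the set $\{1,\theta\}$, which has two elements once $p>3$ (the case $p=3$ being immediate). Decomposing $Q$ into the $m/e$ cosets of $\langle\theta\rangle$, and noting that each translate $\bar d\{1,\theta\}=\{\bar d,\bar d\,\theta\}$ lies inside a single such coset, the problem splits into $m/e$ independent copies of: cover a cyclic group of order $e$, in which $\theta$ is a generator, by translates of a ``consecutive pair''. Identifying that group with $\bZ_e$ via $\theta\mapsto1$, this is the problem of covering $\bZ_e$ by translates of $\{0,1\}$, which the greedy choice $\{0,1\},\{2,3\},\dots$ accomplishes with $\lceil e/2\rceil$ translates (equal to $e/2$ when $e$ is even and $(e+1)/2$ when $e$ is odd). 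Taking the union over all cosets yields a $D$ with $|D|=(m/e)\lceil e/2\rceil$; substituting $m=(p-1)/2$ together with $e=\ell$ (for $\ell$ odd), $e=\ell/2$ odd (for $\ell\equiv2\bmod4$), and $e=\ell/2$ even (for $\ell\equiv0\bmod4$) gives the three displayed values of $|D|$, and hence the theorem.

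I expect no serious obstacle: the covering step requires only an explicit construction, not optimality of $D$, and it reduces to the one-dimensional fact that $\lceil e/2\rceil$ consecutive pairs cover $\bZ_e$. The points to watch are the small verification that $-1=2^{\ell/2}$ for even $\ell$ (where cyclicity of $\bZ_p^*$ enters) and the bookkeeping of the coset index $m/e$ against $\lceil e/2\rceil$ in each of the three parity regimes — this last matching is exactly what produces the three separate formulas.
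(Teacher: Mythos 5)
Your proposal is correct and follows essentially the same route as the paper: both apply Theorem~\ref{thm:main_construction} with a cover of $\bZ_p^*$ built from the cosets of $\langle 2\rangle$, taking the translating elements $d$ to be even powers of $2$ times coset representatives, with the three cases arising from the same parity bookkeeping. Your device of first quotienting by $\{\pm1\}$ and reducing everything to covering $\bZ_e$ by $\lceil e/2\rceil$ translates of $\{0,1\}$ neatly unifies the paper's separate treatments of $\ell$ odd (where cosets pair up under negation) and $\ell$ even (where $-1=2^{\ell/2}\in\langle 2\rangle$), but the resulting set $D$ and the counts are the same.
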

\begin{proof}
Suppose that $\ell$ is odd. Then the theorem follows by Theorem~\ref{thm:main_construction}, provided that we
can find a set of
$(p-1)(1+1/\ell)/4$
sets $B_{2,p}(d)$ that cover $\bZ_p^*$. Let $\langle 2\rangle$ be the multiplicative group generated by $2$ in $\bZ_p^*$. Now $\bZ_p^*$ is the disjoint union of the cosets of $\langle 2 \rangle$:
\begin{equation}
\label{eqn:2_orbits}
\bZ_p^*=C_1\cup C_2\cup\cdots \cup C_t
\end{equation}
where $C_i=\{c_i,2c_i,\ldots,2^{\ell-1}c_i\}$ for some elements $c_i\in\bZ_p^*$. Since $|C_i|=\ell$ for all $i\in\{1,2,\ldots,t\}$, we find that $t=(p-1)/\ell$. For any $i\in\{1,2,\ldots,t\}$, we have that $-C_i=C_j$ for some $j\in\{1,2,\ldots,t\}$. Since $\ell$ is odd, $-1\not\in\langle 2 \rangle$ and so $i\not=j$. So, by relabelling if necessary, we may assume that $C_{2s+1}=-C_{2s}$ for all $s\in\{1,2,\ldots ,t/2\}$. Define
\[
D=D_1\cup D_2\cup\cdots \cup D_{t/2},
\]
where
\[
D_s=\{c_{2s} 2^{2i}\mid 0\leq i< (\ell+1)/2\}
\]
for all $s\in \{1,2,\ldots,t/2\}$. Since $d,2d\in B_{2,p}(d)$ for any $d\in \bZ_p^*$, we see that the blocks $B_{2,p}(d)$ with $d\in D_s$ cover $C_{2s}$. Since $-d,-2d\in B_{2,p}(d)$, we see that the sets $B_{2,p}(d)$ with $d\in D_s$ cover $-C_{2s}=C_{2s+1}$. Hence we have a cover of $\bZ_p^*$ of size $(t/2)(\ell+1)/2$. Since $t=(p-1)/\ell$, we have a cover with the properties we require and so the theorem follows in this case.

Now suppose that $\ell$ is even. This implies that $2^{\ell/2}=-1$ in $\bZ^*_p$. Writing $\bZ^*_p$ as the union~\eqref{eqn:2_orbits} of the cosets of $\langle 2 \rangle$, we see that
\[
C_i=\{c_i2^t\mid 0\leq t< \ell/2\}\cup -\{2^tc_i\mid 0\leq t< \ell/2\}.
\]
So, defining
\[
D_i=\{2^{2j}c_i\mid 0\leq j<\lceil \ell/4\rceil\},
\]
we see that $C_i$ is covered by the $\lceil \ell/4\rceil$ sets $B_{2,p}(d)$ with $d\in D_i$. Defining
\[
D=D_1\cup D_2\cup\cdots \cup D_t
\]
we have constructed a covering of $\bZ_p^*$ consisting of $t \lceil \ell/4\rceil$ sets $B_{2,p}(d)$. Since $t=(p-1)/\ell$, the theorem now follows by Theorem~\ref{thm:main_construction}.
\end{proof}

\begin{corollary}
\label{cor:2_radius_prime}
Let $p$ be a prime such that $p\equiv 5\bmod 8$. Then
\[
f_2(p)\leq \frac{1}{2}\binom{p}{2}+
(p+3)/4\,.
\]
\end{corollary}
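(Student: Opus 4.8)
The plan is to deduce from the congruence $p\equiv 5\bmod 8$ that the multiplicative order $\ell$ of $2$ modulo $p$ is divisible by $4$, so that the third case of Theorem~\ref{thm:2_radius} applies, and then to simplify the resulting bound.

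First I would invoke the supplement to quadratic reciprocity: $2$ is a quadratic residue modulo $p$ if and only if $p\equiv\pm 1\bmod 8$. Since $p\equiv 5\bmod 8$, the element $2$ is a quadratic non-residue, so $2^{(p-1)/2}\equiv -1\bmod p$, and hence $\ell\nmid (p-1)/2$. On the other hand $\ell\mid p-1$ by Lagrange's theorem. Writing $p-1=2^{a}m$ with $m$ odd, the congruence $p\equiv 5\bmod 8$ forces $a=2$; and the fact that $\ell$ divides $2^{a}m$ but not $2^{a-1}m$ forces the $2$-adic valuation of $\ell$ to equal $a=2$. In particular $4\mid\ell$, so we are in the case $\ell\equiv 0\bmod 4$ of Theorem~\ref{thm:2_radius}.

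Applying that case gives $f_2(p)\le \tfrac{1}{4}(p+1)(p-1)+1$, and it then remains only to check the identity $\tfrac{1}{4}(p+1)(p-1)+1=\tfrac{1}{2}\binom{p}{2}+(p+3)/4$, which is immediate since both sides equal $(p^{2}+3)/4$. There is no substantial obstacle here; the only step needing a little care is the elementary $2$-adic valuation argument, which combines the non-residuosity of $2$ modulo $p$ with the fact that $p-1\equiv 4\bmod 8$ to conclude $4\mid\ell$.
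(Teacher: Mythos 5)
Your proof is correct and follows the same route as the paper: both deduce from $p\equiv 5\bmod 8$ that $2$ is a quadratic non-residue, conclude that $4$ divides the order $\ell$ of $2$ modulo $p$ (since $\ell\mid p-1$ but $\ell\nmid(p-1)/2$, and $p-1\equiv 4\bmod 8$), and then invoke the final case of Theorem~\ref{thm:2_radius}. Your explicit $2$-adic valuation argument and the verification that $\tfrac{1}{4}(p+1)(p-1)+1=\tfrac{1}{2}\binom{p}{2}+(p+3)/4$ just spell out details the paper leaves implicit.
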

\begin{proof}
If $p\equiv 5\bmod 8$, then 
$2$ is a quadratic non-residue modulo $p$. The quadratic residues are exactly those elements of $\bZ_p^*$ whose orders divide $(p-1)/2$, and hence $4$ must divide the order of $2$. The corollary then follows by the final case of Theorem~\ref{thm:2_radius}.
\end{proof}

\begin{corollary}
\label{cor:2_radius}
The length $f_2(n)$ of the shortest $n$-ary $2$-radius sequence satisfies
\[
f_2(n)=\frac{1}{2}\binom{n}{2}+O(n^{1.525}).
\]
\end{corollary}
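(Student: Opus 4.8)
The plan is to combine Corollary~\ref{cor:2_radius_prime} (which handles primes $p\equiv 5\bmod 8$) with the interpolation inequality of Lemma~\ref{lem:parameter_change}(ii) and Harman's prime-gap theorem (Theorem~\ref{thm:harman}), and to supply the matching lower bound from Lemma~\ref{lem:lower_bound}.

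First I would fix $n$ and, applying Theorem~\ref{thm:harman} with $a=5$ and $s=8$, choose a prime $p$ with $p\equiv 5\bmod 8$ and $n\le p\le n+n^{0.525}$ (valid for $n$ sufficiently large). By Corollary~\ref{cor:2_radius_prime} we have $f_2(p)\le \tfrac12\binom{p}{2}+(p+3)/4$. Then Lemma~\ref{lem:parameter_change}(ii), with $x=p-n$, gives
\[
f_2(n)\le f_2(p)-\bigl\lceil\bigl((p-n)/p\bigr)f_2(p)\bigr\rceil\le \bigl(1-(p-n)/p\bigr)f_2(p)=\frac{n}{p}\,f_2(p).
\]
The next step is to estimate $\tfrac{n}{p}f_2(p)$. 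Since $p=n+O(n^{0.525})$ we have $\binom{p}{2}=\binom{n}{2}+O(n^{1.525})$, and the correction factor $n/p=1+O(n^{-0.475})$ multiplies a quantity of size $O(n^2)$, again contributing an error of $O(n^{1.525})$; the linear term $(p+3)/4$ is absorbed as well. Hence $f_2(n)\le \tfrac12\binom{n}{2}+O(n^{1.525})$. For the lower bound, Lemma~\ref{lem:lower_bound} with $k=2$ gives $f_2(n)>\tfrac14\binom{n}{2}$, but that is not strong enough; instead one notes that a $2$-radius sequence restricted to a window of length $3$ can witness at most one new pair per step beyond the first two positions after accounting for the fact that each window of $3$ consecutive symbols covers exactly $\binom{3}{2}=3$ pairs but consecutive windows overlap, so a more careful count of pairs-per-position is needed. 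Actually the clean way is: every pair must appear within some window of $3$ consecutive terms, there are $m-2$ such windows, each covering $\binom32=3$ pairs, but each window shares $\binom22=1$ pair with its predecessor, giving roughly $2(m-2)$ distinct pairs, whence $\binom n2\le 2m+O(1)$, i.e. $f_2(n)\ge \tfrac12\binom n2 - O(1)$. Combining the two bounds yields $f_2(n)=\tfrac12\binom n2+O(n^{1.525})$.

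The main obstacle is the lower bound: Lemma~\ref{lem:lower_bound} as stated only gives the constant $1/k$, not $1/2$ of the binomial coefficient with the right constant, so I would need the sharper packing argument sketched above (this is the kind of boundary-term analysis Jaromczyk and Lonc carry out for $k=2$, alluded to in the remark after Lemma~\ref{lem:lower_bound}). Concretely: in a window of $3$ consecutive positions the middle element is within distance $2$ of both neighbours and the two outer elements are within distance $2$ of each other, so the window covers at most $3$ pairs; summing over all $m-2$ windows with the overlap correction shows at most $2m-3$ pairs can be covered, forcing $m\ge \tfrac12\binom n2+\tfrac32$. One must be slightly careful that repeated symbols within a window cover fewer than $3$ pairs, which only helps the inequality. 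With that lower bound in hand the upper bound from the previous paragraph closes the gap, completing the proof of Corollary~\ref{cor:2_radius}.
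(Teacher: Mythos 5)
Your upper bound is exactly the paper's argument: pick a prime $p\equiv 5\bmod 8$ with $n\le p\le n+n^{0.525}$ via Theorem~\ref{thm:harman}, apply Corollary~\ref{cor:2_radius_prime}, and use monotonicity from Lemma~\ref{lem:parameter_change} (the paper just uses the crude $f_2(n)\le f_2(p)$; your refined factor $n/p$ is harmless but unnecessary, since $\binom{p}{2}=\binom{n}{2}+O(n^{1.525})$ already suffices).

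Where you go astray is the lower bound, though not fatally. Lemma~\ref{lem:lower_bound} states $\frac{1}{k}\binom{n}{2}<f_k(n)$, so for $k=2$ it gives $f_2(n)>\frac{1}{2}\binom{n}{2}$ directly --- precisely the constant you need. Your claim that it only yields $\frac{1}{4}\binom{n}{2}$ is a misreading (perhaps you conflated the $\frac{1}{4}(p+1)(p-1)$ appearing in Theorem~\ref{thm:2_radius}, which equals $\frac{1}{2}\binom{p}{2}+\frac{p-1}{4}$, with a lower bound of the form $\frac14\binom n2$). The replacement packing argument you then sketch --- counting the $2m-3$ position pairs $(i,j)$ with $j-i\le 2$ and concluding $\binom{n}{2}\le 2m-3$ --- is correct, but it is literally the same double count as in the proof of Lemma~\ref{lem:lower_bound}, just carried out with the exact constant $2m-3$ instead of the bound $2m$. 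So the "main obstacle" you identify does not exist, and the detour, while valid, reproves a lemma you already have. With that correction the proposal collapses to the paper's proof.
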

\begin{proof}
When $n$ is sufficiently large, Theorem~\ref{thm:harman} implies that there is a prime $p$ such that $p\equiv 5 \bmod 8$ and $n\leq p\leq n+n^{0.525}$. Since $f_2(n)\leq f_2(p)$ by Lemma~\ref{lem:parameter_change}, the upper bound of the corollary follows by Corollary~\ref{cor:2_radius_prime}. The lower bound follows by Lemma~\ref{lem:lower_bound}.
\end{proof}

\section{Logarithmic functions, logarithms and tilings}\label{sec:logs_and_tilings}

The results from Sections~\ref{sec:construction} and~\ref{sec:2_radius} indicate that good coverings (even better, tilings) of $\bZ_p^*$ by sets of the form $B_{k,p}(d)$ will give rise to short $k$-radius sequences. This section defines the notion of a logarithm of length $k$, and explains how to use these logarithms to construct certain high-dimensional tilings. In Section~\ref{sec:k_radius}, we will use these tilings to construct good coverings of $\bZ_p^*$, which in turn will produce short $k$-radius sequences.

\subsection{Logarithmic functions and logarithms}\label{SS:logs}
Given $k\geq1$, a \emph{logarithmic function} (of \emph{length} $k$) is a map $f:\{1,\dots,k\}\rightarrow \bZ_k$ such that $f(ab)=f(a)+f(b)$ whenever $a,b\in \{1,2,\ldots,k\}$ satisfy $ab\leq k$.  A \emph{logarithm} is a bijective logarithmic function.  Logarithms are used to construct lattice tilings of $n$-dimensional space by semi-crosses; see Stein and Szab\'o~\cite[Chapter 3]{SteinSzabo}. They also arise in group theory, number theory and coding theory; see Galovich and Stein~\cite{GalovichStein}, Forcade and Pollington~\cite{ForcadePollington}, and Gordon~\cite{Gordon}.

Galovich and Stein~\cite{GalovichStein} note that a logarithm of length $k$ exists whenever either $k+1$ is prime or $2k+1$ is prime. To see this when $k+1$ is prime, let $\alpha$ be a primitive root modulo $k+1$. Then the `discrete logarithm' map, taking $a\in\{1,2,\ldots,k\}$ to the element $x\in \bZ_k$ such that $a\equiv\alpha^x\bmod{k+1}$, is a logarithm of length~$k$. When $2k+1$ is prime, let $\alpha$ be a primitive root modulo $2k+1$. Let $g:\{1,2,\ldots,k\}\rightarrow \bZ_{2k}$ be the map taking $a\in\{1,2,\ldots,k\}$ to the element $x\in \bZ_{2k}$ such that $a\equiv\alpha^x\bmod{2k+1}$. The composition of $g$ with the natural surjection from $\bZ_{2k}$ to $\bZ_k$ (sending $x\bmod{2k}$ to $x\bmod{k}$) is a logarithm of length $k$. Computations by Galovich and Stein~\cite{GalovichStein}, and later by Forcade and Pollington~\cite{ForcadePollington}, show that logarithms are common for small values of $k$. In particular, their computations show that logarithms exist for all $k$ with $k\leq 204$, except $195$. We have verified and extended Forcade and Pollington's computations; we report our results in Section~\ref{sec:computations}.

Let $p$ be a prime such that $p\equiv 1 \bmod k$, and let $\zeta$ be a primitive $k$th root of unity in $\bZ_p^*$. Let $f$ be the map $f:\{1,2,\ldots,k\}\rightarrow \bZ_k$ defined by the equation $x^{(p-1)/k}\equiv\zeta^{f(x)}\bmod{p}$.
Then $f$ is certainly a logarithmic function of length $k$.
If $f$ happens to be bijective, then $f$ is a logarithm.
We say (following Galovich and Stein \cite{GalovichStein}) that $f$ is a Kummer--Mills-logarithm, or \emph{KM-logarithm}, if it arises in this way for some prime $p$.
(The Galovich-Stein definition for a KM-logarithm  $f$ is that $f$ extends to a $k$-character $\mathbb{Z}_p^*\rightarrow\mathbb{Z}_k$.  The relevant $k$-characters of $\mathbb{Z}_p^*$, restricted to $\{1,\dots,k\}$, correspond precisely to the choices for our function $f$ as $\zeta$ varies over the primitive $k$th roots of unity.)

\begin{theorem}[immediate from Theorem 3 of Mills~\cite{Mills}]
\label{thm:Mills}
Every logarithm of odd length $k$ is a KM-logarithm. If $k\equiv2\bmod{4}$ then a logarithm $f$ of length $k$ is a KM-logarithm if and only if $f(m)$ is even whenever both $m$ divides $k$ and $m\equiv1\bmod{4}$. If $k\equiv0\bmod{4}$, then a logarithm $f$ is a KM-logarithm if and only if $f(m)$ is even whenever $m$ divides $k/4$.  If $f$ is a KM-logarithm of length $k$, then there are infinitely many primes $p$ that exhibit $f$ as a KM-logarithm.
\end{theorem}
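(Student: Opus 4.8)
The plan is to obtain the theorem by translating Mills' Theorem~3 into the language of logarithms, supplemented by an elementary reciprocity computation that matches Mills' conditions with the ones stated here. By the parenthetical remark following the definition of a KM-logarithm, being a KM-logarithm of length $k$ is the same as being the restriction to $\{1,2,\dots,k\}$ of a surjective homomorphism $\chi\colon\bZ_p^*\rightarrow\bZ_k$ for some prime $p\equiv 1\bmod k$ (here $p>k$, so $1,2,\dots,k$ really are distinct elements of $\bZ_p^*$, and varying the primitive $k$th root of unity $\zeta$ amounts to post-composing $\chi$ with an automorphism of $\bZ_k$, which cannot change whether $f$ is a KM-logarithm). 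So the first step is to record this dictionary carefully.

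The second step is to check the \emph{necessity} of the parity conditions directly; this both motivates the statement and provides a cross-check on the translation of Mills. Suppose $f$ is the restriction of such a $\chi$, and recall that for $k$ even, $\chi(a)$ is even in $\bZ_k$ precisely when $a$ is a square in $\bZ_p^*$ (both $\chi^{-1}(2\bZ_k)$ and the squares are the unique index-$2$ subgroup of the cyclic group $\bZ_p^*$). Now let $m\mid k$, so that $p\equiv 1\bmod m$. If $k\equiv 2\bmod 4$ and $m\equiv 1\bmod 4$, quadratic reciprocity gives $\Leg{m}{p}=\Leg{p}{m}=1$, so $m$ is a square modulo $p$ and $f(m)$ is even. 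If $k\equiv 0\bmod 4$ and $m\mid k/4$, then $p\equiv 1\bmod 4$, and if $m$ is even then $8\mid k$ so $p\equiv 1\bmod 8$; using the supplement $\Leg{2}{p}=1$ together with reciprocity for the odd part of $m$ again shows $m$ is a square modulo $p$, so $f(m)$ is even. If $k$ is odd there is nothing to prove, since $2$ is a unit in $\bZ_k$.

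The main work --- and the step I expect to be the genuine obstacle --- is to invoke Mills' Theorem~3 in a form giving both the \emph{sufficiency} of these conditions and the existence of infinitely many primes exhibiting a given KM-logarithm, and then to verify that Mills' conditions, stated in his notation for power residue characters, reduce exactly to ``$f(m)$ even for all $m\mid k$ with $m\equiv 1\bmod 4$'' when $k\equiv 2\bmod4$, and to ``$f(m)$ even for all $m\mid k/4$'' when $k\equiv 0\bmod4$, with no condition when $k$ is odd. I would unwind Mills' consistency relations into assertions about which small integers are forced to be $k$th-power residues modulo any admissible $p$; the reciprocity identities of the previous paragraph are precisely what identifies these forced residue conditions with the parity conditions on $f$. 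Once this identification is in place, the final clause about infinitely many primes is immediate from the corresponding clause of Mills' theorem.
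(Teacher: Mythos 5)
Your proposal takes essentially the same route as the paper, which offers no proof of this statement beyond the bracketed attribution to Mills' Theorem~3: reducing to Mills by translating logarithms into preassigned character values is exactly the paper's (implicit) argument. Your supplementary quadratic-reciprocity check that the parity conditions are necessary is correct, and is the same computation the paper itself uses later (in the proofs of Lemma~\ref{lem:stronglogs} and Theorem~\ref{thm:stronglog_iff_k_radius}), so nothing in your sketch conflicts with the source.
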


A logarithm $f$ of length $k$ will be called a \emph{special KM-logarithm} if either (i) $k$ is odd, or (ii) $k$ is even and $f(m)$ is even whenever $m$ divides $k/2$.  Any special KM-logarithm is a KM-logarithm, and indeed unless $k\equiv 2$, $4$, or $6 \bmod{8}$, the concepts of a KM-logarithm and a special KM-logarithm coincide. (For $k$ odd this is clear.  If $8 | k$  and $f$ is a KM-logarithm then from Theorem \ref{thm:Mills} we see that $f(m)$ is even whenever $m$ divides $k/4$.  In particular this holds for prime values of $m$ dividing $k/4$.  Since $8|k$, the primes dividing $k/4$ are the same as those dividing $k/2$.  Hence $f(m)$ is even for each prime $m$ dividing $k/2$, and hence for all $m$ dividing $k/2$.  Thus $f$ is a special KM-logarithm.)  A \emph{special KM-logarithmic function} will naturally be a logarithmic function that satisfies the parity conditions for being a special KM-logarithm, but need not be bijective.

\begin{lemma}\label{lem:stronglogs}
There are infinitely many $k$ for which there is a special KM-logarithm of length~$k$.
Indeed we have:

(i) if $p=k+1$ is prime and $8 | k$, then there is a special KM-logarithm of length~$k$;

(ii) if $p=2k+1$ is prime, then there is a special KM-logarithm of length~$k$.
\end{lemma}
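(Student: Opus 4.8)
The plan is to exhibit in each case a specific logarithm of length $k$ and then verify the parity conditions defining a special KM-logarithm. For the logarithms I would simply reuse the two constructions recalled at the start of this section: when $p=k+1$ is prime, fix a primitive root $\alpha$ modulo $p$ and let $f\colon\{1,\dots,k\}\to\bZ_k$ be the discrete-logarithm map $\alpha^{f(a)}\equiv a\bmod p$; when $p=2k+1$ is prime, take the discrete logarithm modulo $p$, which lands in $\bZ_{2k}$, and compose with the natural surjection $\bZ_{2k}\to\bZ_k$. These are already known to be logarithms of length $k$, so no work is needed there. For the opening sentence (``infinitely many $k$''), I would note that part~(ii) already applies with $k=(p-1)/2$ for \emph{every} odd prime $p$, so infinitely many values of $k$ occur; alternatively Dirichlet's theorem gives infinitely many primes $p\equiv1\bmod8$, each of which feeds part~(i) via $k=p-1$.

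The substance is the parity check. If $k$ is odd there is nothing to do, so I would assume $k$ is even --- which in part~(i) is part of the hypothesis ($8\mid k$), and in part~(ii) is the only case not already handled. The key observation is that, \emph{because $k$ is even}, for $m\in\{1,\dots,k\}$ the value $f(m)\in\bZ_k$ is even if and only if $m$ is a quadratic residue modulo $p$: writing $g\in\bZ_{p-1}$ for the discrete logarithm of $m$ (so $f(m)$ is $g$ reduced modulo $k$, and $p-1$ is a multiple of $k$), one has $m^{(p-1)/2}\equiv1\bmod p$ exactly when $g$ is even, and since $k$ is even, $g$ is even exactly when $f(m)$ is even. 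Thus the condition ``$f(m)$ even whenever $m\mid k/2$'' translates into ``every divisor of $k/2$ is a quadratic residue modulo~$p$'', and by multiplicativity of the Legendre symbol it is enough to check this for each prime divisor $q$ of $k/2$.

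The one step that needs care is this quadratic-residue computation, which I would do with reciprocity. For an odd prime $q\mid k/2$ one has $q\mid(p-1)/2$ in case~(i) and $q\mid(p-1)/4$ in case~(ii), so in both cases $p\equiv1\bmod q$, giving $\Leg{p}{q}=1$; reciprocity then yields $\Leg{q}{p}=(-1)^{\frac{q-1}{2}\cdot\frac{p-1}{2}}$, and the point is that $\tfrac{p-1}{2}$ equals $k/2$ in case~(i) (divisible by $4$ since $8\mid k$) and equals $k$ in case~(ii) (even), so the exponent is even and $\Leg{q}{p}=1$. For $q=2$ (which can only occur when $4\mid k$, automatic in case~(i) and forcing $8\mid 2k=p-1$ in case~(ii)) one has $p\equiv1\bmod8$, so $2$ is a quadratic residue by the supplementary law. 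Hence every $m\mid k/2$ is a quadratic residue modulo $p$, the parity condition holds, and $f$ is a special KM-logarithm. I expect this reciprocity bookkeeping --- correctly identifying $(p-1)/2$ in each case, matching its parity to the hypotheses, and separately disposing of $q=2$ --- to be the only real obstacle; everything else follows directly from the definitions and the material already in this section.
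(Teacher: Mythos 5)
Your proof is correct, and for part (ii) it is essentially the paper's argument: both take the logarithm induced by a discrete logarithm modulo $p=2k+1$, note that $k$ even forces $p\equiv1\bmod4$, and use quadratic reciprocity (plus the supplementary law for $2$ when $4\mid k$) to show each prime divisor of $k/2$ is a quadratic residue, hence has even logarithm. Where you diverge is part (i). The paper does not redo any reciprocity there: it simply observes that the discrete logarithm modulo $p=k+1$ is a KM-logarithm by construction (here $(p-1)/k=1$, so the power-residue definition reduces to the discrete logarithm), and then cites the remark preceding the lemma --- itself a consequence of Mills' theorem --- that when $8\mid k$ every KM-logarithm is automatically a special KM-logarithm, because the Mills condition ``$f(m)$ even for $m\mid k/4$'' already covers all primes dividing $k/2$. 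Your version instead runs the same Legendre-symbol computation as in (ii) with $(p-1)/2=k/2$, using $8\mid k$ to make the reciprocity exponent even and to get $p\equiv1\bmod8$ for the prime $2$. Both are sound; yours is more uniform and self-contained (a single reciprocity argument handles both cases, with the correct identification of $f(m)$ even $\Leftrightarrow$ $\Leg{m}{p}=1$ when $k$ is even), while the paper's is shorter given the machinery it has already set up and makes visible exactly where the hypothesis $8\mid k$ enters the KM/special-KM dichotomy.
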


\begin{proof}
(i) For such $p$ any KM-logarithm is a special KM-logarithm, and a discrete logarithm in $\bZ_p^*$ is a KM-logarithm.

(ii) Let $f$ be a logarithm of length $k$ induced by a discrete logarithm in $\bZ_p^*$.  If $k$ is odd, then we are done: $f$ is a special KM-logarithm.  If $k$ is even, then $p\equiv 1\bmod{4}$, and hence for any odd prime $q$ dividing $k$ we have $\left(\frac{q}{p}\right)=\left(\frac{p}{q}\right)=1$, and since $k$ is even we conclude that $f(q)$ must be even.  If $4\mid k$ then $p\equiv1 \bmod{8}$, so $\left(\frac{2}{p}\right)=1$, and hence $f(2)$ must be even.  We see that in all cases $f$ is a special KM-logarithm.
\end{proof}

\subsection{Logarithms and tilings}

Let $r$ be a positive integer, and let $\cC\subseteq \bZ^r$ be a finite set; we refer to $\cC$ as a \emph{cluster}.
A \emph{$\bZ$-tiling} of $\bZ^r$ by the cluster $\cC$ is a set $L\subseteq \bZ^r$ of points such that the union $\bigcup_{x\in L} (x+\cC)$ is a disjoint union and is equal to $\bZ^r$. If $L$ is an integer lattice of determinant $k$ (i.e., $L$ is a subgroup of $\bZ^r$ of index $k$), then $L$ is a $\bZ$-tiling of $\bZ^r$ by $\cC$ if and only if $\cC$ is a set of coset representatives for $L$; in other words if and only if the natural map from $\bZ^r$ to the quotient $\bZ^r/L$ is bijective when restricted to $\cC$. The tiling terminology comes from imagining $\cC$ as a set of $r$-dimensional unit hypercubes, indexed by the positions in $\bZ^r$ of their centres. A $\bZ$-tiling is then a tiling of $\bR^r$ by translates of $\cC$ in the usual sense.

We now show how to use logarithms to produce certain tilings. Let $q_1,q_2,\ldots,q_r$ be the primes in the set $\{1,2,\ldots,k\}$; so $r=\pi(k)$.  Let $\mathbb{N}_0$ be the set of non-negative integers, and let $\cC_k$ be the cluster of points in $\bZ^r$ defined by
\begin{equation}\label{E:cluster}
\cC_k=\left\{(i_1,i_2,\ldots,i_r)\in \mathbb{N}_0^r\mid \prod_{j=1}^r q_j^{i_j}\leq k\right\}.
\end{equation}
We identify the points in $\cC_k$ with the integers $1$, \dots, $k$.

\begin{theorem}
\label{thm:tiling}
Let $k$ be a fixed positive integer, let $r=\pi(k)$ and define $\cC_k$ as above. If there exists a logarithm of length $k$, then there exists a $\bZ$-tiling of $\bZ^r$ by $\cC_k$.
\end{theorem}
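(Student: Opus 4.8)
The plan is to build the tiling lattice directly out of the logarithm. Write $q_1,\dots,q_r$ for the primes in $\{1,\dots,k\}$, so that $r=\pi(k)$, and let $f\colon\{1,\dots,k\}\rightarrow\bZ_k$ be the given logarithm. Define a group homomorphism $\phi\colon\bZ^r\rightarrow\bZ_k$ by sending the $j$th standard basis vector $e_j$ to $f(q_j)$; this is well defined because $\bZ^r$ is free. Set $L=\ker\phi$. I claim that $L$ is a subgroup of $\bZ^r$ of index $k$ and that $\cC_k$ is a complete set of coset representatives for $L$. Granting this, the criterion recalled just before the statement of the theorem immediately gives that $L$ is a $\bZ$-tiling of $\bZ^r$ by $\cC_k$.

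The first step is routine: the assignment $(i_1,\dots,i_r)\mapsto\prod_{j=1}^r q_j^{i_j}$ is a bijection from $\cC_k$ onto $\{1,2,\dots,k\}$, since every integer $m$ with $1\le m\le k$ has all of its prime factors among $q_1,\dots,q_r$ and, by unique factorisation, exactly one preimage, which lies in $\cC_k$ precisely because the corresponding product equals $m\le k$. (This is the identification of $\cC_k$ with $\{1,\dots,k\}$ noted after \eqref{E:cluster}.) The key step is then to show that for each $(i_1,\dots,i_r)\in\cC_k$, writing $m=\prod_j q_j^{i_j}$, one has $\phi(i_1,\dots,i_r)=\sum_j i_j f(q_j)=f(m)$. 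This goes by induction on $m$: for $m=1$ the logarithmic identity applied to $1\cdot 1\le k$ forces $f(1)=0$, matching the empty sum; for $m>1$ choose a prime $q_j$ dividing $m$ and apply $f(m)=f(q_j)+f(m/q_j)$, which is legitimate since $q_j\le k$, $m/q_j\le k$, and $q_j\cdot(m/q_j)=m\le k$, and then invoke the inductive hypothesis on $m/q_j$. Combining this with the bijection of the first step and the bijectivity of $f$ shows that the restriction $\phi|_{\cC_k}\colon\cC_k\rightarrow\bZ_k$ is a bijection.

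It remains to assemble the pieces. Since $\phi|_{\cC_k}$ is onto $\bZ_k$, the homomorphism $\phi$ is surjective, so $\bZ^r/L\cong\bZ_k$ and $L$ has index (equivalently, determinant) $k$ in $\bZ^r$. After identifying $\bZ^r/L$ with $\bZ_k$ via the isomorphism induced by $\phi$, the natural quotient map $\bZ^r\rightarrow\bZ^r/L$ is exactly $\phi$, which we have just shown restricts to a bijection on $\cC_k$; so $\cC_k$ is a set of coset representatives for $L$ and the criterion cited above applies. I expect the only point requiring any care to be the inductive key step, since the logarithmic identity $f(ab)=f(a)+f(b)$ is available only when $ab\le k$; the saving observation is that every intermediate quotient arising in the induction divides $m\le k$ and is therefore itself at most $k$, so the identity always applies where it is needed.
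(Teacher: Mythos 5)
Your proof is correct and is essentially the same as the paper's: both define $\phi\colon\bZ^r\rightarrow\bZ_k$ by $e_j\mapsto f(q_j)$, take $L=\ker\phi$, observe that $\phi$ restricted to $\cC_k$ coincides with $f$ under the identification of $\cC_k$ with $\{1,\dots,k\}$, and conclude via the coset-representative criterion. The only difference is that you spell out the inductive verification that $\phi|_{\cC_k}=f$ (including $f(1)=0$), which the paper states in one line.
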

\begin{proof}
Let $f$ be a logarithm of length $k$. Define a group homomorphism $\phi:\bZ^r\rightarrow \bZ_k$ by
\[
\phi\bigl((i_1,i_2,\ldots,i_r)\bigr)=i_1f(q_1)+i_2f(q_2)+\cdots+i_rf(q_r),
\]
so $\phi$ sends the $j$th standard basis vector to $f(q_j)$.

Define the lattice $L\subseteq \bZ^r$ to be the kernel of $\phi$. By our remarks above, the union $\bigcup_{x\in L} (x+\cC_k)$ is a tiling if and only if $\phi$ is a bijection when restricted to $\cC_k$. Now,
\[
\phi\bigl((i_1,i_2,\ldots,i_r)\bigr)=f(q_1^{i_1}q_2^{i_2}\cdots q_r^{i_r})
\]
for all $(i_1,i_2,\ldots,i_r)\in \cC_k$, since $f(ab)=f(a)+f(b)$ whenever $a,b,ab\in\{1,2,\ldots,k\}$. So the restriction of $\phi$ to $\cC_k$ is equal to $f$. Since $f$ is a bijection, we have the tiling we require.
\end{proof}

\section{$k$-radius sequences}
\label{sec:k_radius}

This section contains the main result of our paper: the leading term of $f_k(n)$ for infinitely many values of~$k$ (including all $k\leq 194$). We need to establish a simple geometric lemma before we begin the proof.

For linearly independent vectors $x_1,x_2,\ldots,x_r\in \bR^r$, we define the $r$-dimensional parallelotope $P(x_1,x_2,\ldots,x_r)$ to be the region
\[
P(x_1,x_2,\ldots,x_r)=\left\{\sum_{i=1}^ra_ix_i\mid 0\leq a_i\leq 1\right\}.
\]

\begin{lemma}
\label{lem:parallelotope}
Let $r$ be a fixed positive integer, and let $\delta$ be a positive real number. For any linearly independent vectors $x_1,x_2,\ldots,x_r\in \bR^r$ with $\el{x_1}\leq \el{x_2}\leq\cdots\leq \el{x_r}$, let $P$ be the parallelotope $P=P(x_1,x_2,\ldots,x_r)$, and let $Q$ be the set of points in $\bR^r$ within a distance $\delta$ of $P$. Then
\[
\vol{r}{Q}\leq\vol{r}{P}+O\left(\prod_{i=2}^r\el{x_i}\right),
\]
where the implicit constant depends only on $r$ and $\delta$.
\end{lemma}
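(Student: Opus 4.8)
The plan is to compare $Q$ with $P$ directly, decomposing $Q\setminus P$ into pieces that can each be bounded by (a constant multiple of) $\prod_{i=2}^r\el{x_i}$. Write $P=P(x_1,\dots,x_r)$ and recall $\vol{r}{P}$ equals the absolute value of $\det(x_1,\dots,x_r)$. Since $Q$ is the $\delta$-neighbourhood of $P$, every point of $Q\setminus P$ lies within distance $\delta$ of the boundary $\partial P$. The boundary consists of at most $2r$ facets, each of which is a translate of one of the $(r-1)$-dimensional parallelotopes $P_\ell:=P(x_1,\dots,\widehat{x_\ell},\dots,x_r)$ obtained by deleting one generator. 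So it suffices to show that the set of points within distance $\delta$ of a single such facet has $r$-dimensional volume $O\!\left(\prod_{i=2}^r\el{x_i}\right)$, with the implied constant depending only on $r$ and $\delta$.

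Next I would bound the $\delta$-neighbourhood of one facet. The $\delta$-neighbourhood of the facet $P_\ell$ is contained in the Minkowski sum $P_\ell + B(0,\delta)$, where $B(0,\delta)$ is the ball of radius $\delta$; and this in turn is contained in $P_\ell + C_\delta$ where $C_\delta=[-\delta,\delta]^r$ is a cube. The volume of $P_\ell+C_\delta$ is at most $2^r$ times the $(r-1)$-volume of $P_\ell$ times $\delta$ plus lower-order contributions — more cleanly, one can use the Steiner-type bound $\vol{r}{P_\ell+C_\delta}\le \prod_{i\ne\ell}(\el{x_i}+2\delta\sqrt r)$, since $P_\ell$ fits inside a box of side lengths $\el{x_i}+2\delta\sqrt r$ in suitable coordinates (each generator $x_i$ has Euclidean length $\el{x_i}$, so the parallelotope $P_\ell$ lies in a coordinate box whose $i$th side has length at most $\el{x_i}$, and thickening by $\delta$ in every direction enlarges each side by at most $2\delta\sqrt r$). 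Thus
\[
\vol{r}{Q}-\vol{r}{P}\ \le\ 2r\cdot\max_{1\le\ell\le r}\ \prod_{i\ne\ell}\bigl(\el{x_i}+2\delta\sqrt r\bigr).
\]
Expanding the product and using $\el{x_1}\le\el{x_2}\le\cdots\le\el{x_r}$, the dominant term in each factor-product is $\prod_{i\ne\ell}\el{x_i}$, which is largest when $\ell=1$, giving $\prod_{i=2}^r\el{x_i}$; every other monomial in the expansion omits at least one of $\el{x_2},\dots,\el{x_r}$ and replaces it by a constant, so it is also $O\!\left(\prod_{i=2}^r\el{x_i}\right)$ (here we may harmlessly assume $\el{x_i}\ge 1$, since if some $\el{x_i}<1$ the whole parallelotope has bounded diameter and the statement is trivial — alternatively absorb everything into the implied constant). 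This yields the claimed bound.

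The one genuinely delicate point is the very first reduction: I must make sure that $P_\ell$ really does fit inside a coordinate box whose side lengths are controlled by $\el{x_1},\dots,\widehat{\el{x_\ell}},\dots,\el{x_r}$, so that deleting the \emph{shortest} generator $x_1$ is what maximises the bound and produces exactly $\prod_{i=2}^r\el{x_i}$. This is where the hypothesis $\el{x_1}\le\cdots\le\el{x_r}$ is used, and it is the reason the error term is a product over $i\ge 2$ rather than over all $i$: whichever facet we look at, the product of the relevant edge-lengths is at most $\el{x_2}\el{x_3}\cdots\el{x_r}$, because each facet's generating set omits at least one $x_i$, hence omits an edge of length at least $\el{x_1}$ — wait, that only helps if the omitted index is $1$; for a facet omitting $x_\ell$ with $\ell\ge 2$ the product $\prod_{i\ne\ell}\el{x_i}$ could in principle exceed $\prod_{i\ge 2}\el{x_i}$ only if $\el{x_1}>\el{x_\ell}$, which is impossible by the ordering. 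So in fact $\prod_{i\ne\ell}\el{x_i}\le\prod_{i=2}^r\el{x_i}$ for every $\ell$, which closes the argument. Handling the trivial case (some $\el{x_i}$ small, or $r=1$) and keeping the implied constant honestly dependent only on $r$ and $\delta$ are routine bookkeeping.
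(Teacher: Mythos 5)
Your overall strategy --- cover $Q\setminus P$ by the $\delta$-neighbourhoods of the boundary faces, bound each by a Hadamard-type product of edge lengths, and use the ordering $\el{x_1}\leq\cdots\leq\el{x_r}$ to reduce every such product to $\prod_{i=2}^r\el{x_i}$ --- is the same as the paper's, and your covering of $Q\setminus P$ by the $2r$ facet-neighbourhoods is correct, as is the final observation that $\prod_{i\neq\ell}\el{x_i}\leq\prod_{i=2}^r\el{x_i}$ for every $\ell$. The gap is in the one step where all the difficulty of the lemma actually lives: your bound on the volume of the $\delta$-neighbourhood of a single facet rests on the claim that $P_\ell=P(x_1,\ldots,\widehat{x_\ell},\ldots,x_r)$ ``lies in a coordinate box whose $i$th side has length at most $\el{x_i}$''. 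That is false for skew parallelotopes: already in the plane, $P\bigl((1,0),(1,\varepsilon)\bigr)$ contains the point $(2,\varepsilon)$ and so has diameter $\geq 2$, which cannot fit in any box with side lengths $1$ and $\sqrt{1+\varepsilon^2}$. What Hadamard's inequality gives you is a bound on the \emph{volume} $\vol{r-1}{P_\ell}\leq\prod_{i\neq\ell}\el{x_i}$, not containment in a small box; and the volume of the $\delta$-neighbourhood of $P_\ell$ is not controlled by $\vol{r-1}{P_\ell}$ alone --- it also picks up contributions from the lower-dimensional faces of $P_\ell$ (a Steiner-formula phenomenon), which must be bounded separately. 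So the asserted inequality $\vol{r}{P_\ell+C_\delta}\leq\prod_{i\neq\ell}\bigl(\el{x_i}+2\delta\sqrt{r}\bigr)$ is unjustified (and, as written, not even dimensionally a volume bound for an $r$-dimensional set from $r-1$ factors).

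The paper closes exactly this gap by decomposing $Q\setminus P$ more finely: for each proper face $F$ of $P$, of every dimension $i$, it takes $Q_F$ to be the points whose nearest point of $P$ lies in the relative interior of $F$; then $Q_F$ is congruent to a subset of $F\times B_{r-i}$, so $\vol{r}{Q_F}\leq\vol{i}{F}(2\delta)^{r}\leq(2\delta)^r\prod_{j=2}^r\el{x_j}$ by Hadamard applied to each face, and there are boundedly many faces. You could repair your argument either by adopting that face-by-face decomposition, by computing $\vol{r}{P_\ell+C_\delta}$ as a zonotope volume (a sum of $|\det|$'s over $r$-subsets of the generators $\{x_i\}_{i\neq\ell}\cup\{2\delta e_j\}_{j=1}^r$, each bounded by a product of at most $r-1$ of the $\el{x_i}$ times powers of $2\delta$), or by induction on $r$. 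One further caution, which applies to the paper's proof as well: the step $\prod_{j\in S}\el{x_j}\leq\prod_{j=2}^r\el{x_j}$ for $|S|<r-1$ needs the omitted factors to be at least $1$; your proposed fix (``if some $\el{x_i}<1$ the parallelotope has bounded diameter'') is not right, since one short generator does not bound the diameter. In the paper's application the $x_i$ are nonzero lattice vectors with $\el{x_1}\geq(\log_k p)/r$, so this is harmless there, but it should not be waved away in a proof of the lemma as stated for arbitrary real vectors.
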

\begin{proof}
The parallelotope has a bounded number of proper subfaces $F$ (it has $\binom{r}{i}2^{r-i}$ subfaces of dimension $i$). For a proper subface $F$, define $Q_F$ to be the set of points $v\in\bR^r$ such that (i)~$v$~is within a distance $\delta$ of $F$, and (ii)~the nearest point in $F$ to $v$ does not lie in a proper subface of $F$. Then
\[
Q=P\cup\bigcup_F Q_F,
\]
where the union runs over all proper subfaces $F$ of $P$. This decomposition in the case $r=2$ is illustrated in Figure~\ref{fig:parallelogram}. Here, the parallelotope $P$ (a parallelogram as we are in dimension $2$) is in bold; the circles are the sets $Q_F$ when $F$ has dimension $0$ and the rectangles are $Q_F$ when $F$ has dimension~$1$.
\begin{figure}
\begin{center}
\includegraphics[width=100mm]{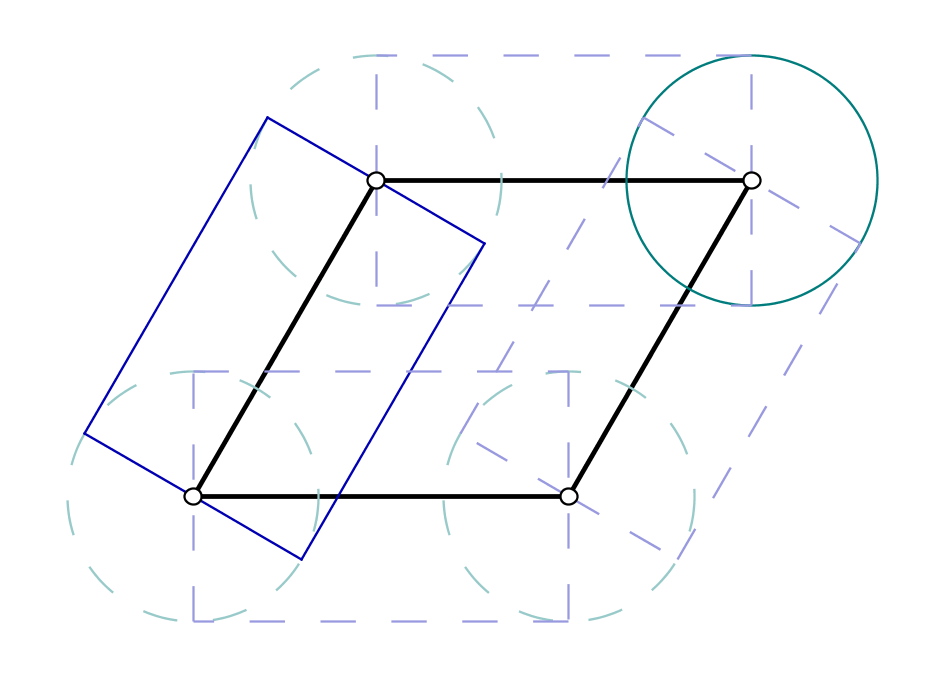}
\end{center}
\caption{Decomposing $Q$}
\label{fig:parallelogram}
\end{figure}
This decomposition shows that
\begin{equation}
\label{eqn:decomposition}
\vol{r}{Q}\leq \vol{r}{P}+\sum_F\vol{r}{Q_F}.
\end{equation}

It is not difficult to see that when $F$ is $i$-dimensional, $Q_F$ is congruent to $F\times B_{r-i}$, where $B_{r-i}$ is the $(r-i)$-dimensional ball of radius $\delta$. Now $F$ is congruent to an $i$-dimensional parallelotope, spanned by a proper subset $S\subseteq\{1,2,\ldots,r\}$ of the vectors $x_i$. So
\[
\vol{i}{F}\leq \prod_{j\in S} \el{x_j}\leq \prod_{j=r-i+1}^r \el{x_j}\leq \prod_{j=2}^r \el{x_j}.
\]
Since the ball $B_{r-i}$ is contained in a hypercube with sides of length $2\delta$, we find that $\vol{r-i}{B_{r-i}}\leq (2\delta)^r$. Hence
\[
\vol{r}{Q_F}=\vol{i}{F}\times \vol{r-i}{B_{r-i}}\leq (2\delta)^r\prod_{j=2}^r \el{x_j}=O\left(\prod_{j=2}^r \el{x_j}\right).
\]
The lemma now follows by~\eqref{eqn:decomposition}, since the number of terms in the sum in~\eqref{eqn:decomposition} is bounded.
\end{proof}

\begin{theorem}
\label{thm:k_radius}
Let $k$ be a fixed positive integer, and let $r=\pi(k)$. Suppose there exists a $\bZ$-tiling $L$ of $\bZ^r$ by $\cC_k$ \emph{(}as defined by~\eqref{E:cluster}\emph{)}. Then
\[
f_k(n)=\frac{1}{k}\binom{n}{2}+O(n^2/\log n).
\]
\emph{(}Recall that such a tiling exists whenever there is a logarithm of length $k$, so in particular when $k\leq 194$ or when either $k+1$ or $2k+1$ is prime.\emph{)}
\end{theorem}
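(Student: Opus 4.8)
The lower bound $\tfrac1k\binom n2<f_k(n)$ is already Lemma~\ref{lem:lower_bound}, so the plan is to establish the matching upper bound $f_k(n)\le\tfrac1k\binom n2+O(n^2/\log n)$. First I would pass to a prime alphabet: Lemma~\ref{lem:parameter_change} gives $f_k(n)\le f_k(p)$ for any prime $p\ge n$, and Theorem~\ref{thm:harman} (with $s=1$) supplies a prime $p$ with $n\le p\le n+n^{0.525}$, whence $\tfrac1k\binom p2=\tfrac1k\binom n2+O(n^{1.525})$ and $p^2/\log p=O(n^2/\log n)$; so it would suffice to show $f_k(p)\le\tfrac1k\binom p2+O_k(p^2/\log p)$ for large primes $p$. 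By Theorem~\ref{thm:main_construction} this reduces to producing a set $D\subseteq\bZ_p^*$ with $\bigcup_{d\in D}B_{k,p}(d)=\bZ_p^*$ and $|D|\le\tfrac{p-1}{2k}+O_k(p/\log p)$, since such a $D$ yields a $p$-ary $k$-radius sequence of length $|D|(p+k-1)+1=\tfrac{p(p-1)}{2k}+O_k(p^2/\log p)=\tfrac1k\binom p2+O_k(p^2/\log p)$.

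To build $D$ I would transport the given tiling $L$ into $\bZ_p^*$. Let $q_1,\dots,q_r$ be the primes in $\{1,\dots,k\}$ and let $\Phi\colon\bZ^r\to\bZ_p^*$ be the homomorphism with $\Phi(e_j)=q_j$; restricted to $\cC_k$, the map $\Phi$ is exactly the identification of $\cC_k$ with $\{1,\dots,k\}\subseteq\bZ_p^*$, and it is injective once $p>k$. Put $H=\Phi(\bZ^r)=\langle q_1,\dots,q_r\rangle$ and $H^{\pm}=\langle-1\rangle H$. Since $\{1,\dots,k\}\subseteq H$, any $B_{k,p}(d)$ with $d$ in a coset $gH^{\pm}$ lies in $gH^{\pm}$, and $gH^{\pm}$ is closed under negation; so I would cover $\bZ_p^*$ coset by coset, and (dividing through by a coset representative) it is enough to cover $H^{\pm}$ by at most $|H^{\pm}|/(2k)+O_k(|H|/\log p)$ sets $B_{k,p}(d)$, $d\in H^{\pm}$ --- summing over the $(p-1)/|H^{\pm}|$ cosets then gives the wanted bound on $|D|$. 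To cover $H^{\pm}$: let $M=\ker\Phi$ (index $|H|$ in $\bZ^r$), take $M'=M$ if $-1\notin H$ and $M'=\langle M,x_0\rangle$ (index $|H|/2$) if $-1\in H$, where $\Phi(x_0)=-1$, and let $\mathcal P$ be the fundamental parallelotope of a reduced basis $x_1,\dots,x_r$ of $M'$ (so $\el{x_1}\le\dots\le\el{x_r}$, $\el{x_1}=\lambda_1(M')$ is the length of a shortest nonzero vector of $M'$, and $\prod_i\el{x_i}=O_r(\det M')$). The $\bZ^r$-points of $\mathcal P$ form a transversal of $M'$, so $\Phi$ maps them bijectively onto a set of coset representatives for $\langle-1\rangle$ in $H$ (onto all of $H$ if $-1\notin H$). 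Those tiles $x+\cC_k$, $x\in L$, that lie entirely in $\mathcal P$ are disjoint and cover $\mathcal P\cap\bZ^r$ apart from a set $R$ of points within $\mathrm{diam}(\cC_k)=O_k(1)$ of $\partial\mathcal P$; since $B_{k,p}(\Phi(x))=\Phi(x)\{1,\dots,k\}\cup(-\Phi(x))\{1,\dots,k\}$, applying $\Phi$ shows that these (at most $|\mathcal P\cap\bZ^r|/k$) sets $B_{k,p}(\Phi(x))$, plus one $B_{k,p}(z)$ for each of the $\le 2|R|$ leftover points, cover $H^{\pm}$. As $|\mathcal P\cap\bZ^r|=\det M'$ equals $|H|$ or $|H|/2$, this is a cover of $H^{\pm}$ of size $|H^{\pm}|/(2k)+O_k(|R|)$.

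The whole argument then comes down to bounding $|R|$, and this is where Lemma~\ref{lem:parallelotope} should be applied, with $\delta=\mathrm{diam}(\cC_k)+\sqrt r$: it gives $|R|=O_k\bigl(\prod_{i=2}^r\el{x_i}\bigr)=O_r\bigl(\det M'/\el{x_1}\bigr)=O_r(|H|/\el{x_1})$ since the basis is reduced. The key point I would need is that $\el{x_1}=\lambda_1(M')\gg_k\log p$: a nonzero $(i_1,\dots,i_r)\in M'$ of sup-norm $m$ has $\prod_jq_j^{i_j}\equiv\pm1\pmod p$, so $p$ divides a nonzero integer of absolute value at most $2\bigl(\prod_{q\le k}q\bigr)^{m}$, which forces $m\gg_k\log p$. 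Then $|R|=O_k(|H|/\log p)$, and summing the cover sizes over the cosets of $H^{\pm}$ gives $|D|=\tfrac{p-1}{2k}+O_k(p/\log p)$, as needed.

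I expect this final boundary estimate to be the main obstacle: it combines the elementary ``no short vectors in $M'$'' fact with the geometry-of-numbers bound of Lemma~\ref{lem:parallelotope}, and one must also check that $\mathcal P$ is genuinely fat compared with $\cC_k$ so that many tiles fit inside (which holds because $\lambda_1(M')\to\infty$ with $p$). The remaining ingredients --- the reduction to primes, the coset decomposition, and the two cases according as $-1\in H$ --- are routine bookkeeping; note in particular that only the covering and disjointness properties of $L$ are used, so $L$ need not be assumed to be a lattice.
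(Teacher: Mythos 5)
Your proposal is correct and follows the same skeleton as the paper's proof — transport the tiling through the exponent map $\phi:\bZ^r\to H=\langle q_1,\dots,q_r\rangle$, take an LLL-reduced basis of the kernel lattice, show its shortest nonzero vector has length $\gg\log p$ (so the fundamental parallelotope is ``fat''), control the boundary contribution with Lemma~\ref{lem:parallelotope}, and feed the resulting covering of $\bZ_p^*$ into Theorem~\ref{thm:main_construction}. Where you genuinely diverge is in the treatment of $-1$: the paper sidesteps the dichotomy $-1\in H$ versus $-1\notin H$ by choosing $p\equiv-1\bmod 8q_2\cdots q_r$ (via Harman's theorem in that progression), which forces every $q_i$ to be a quadratic residue and $-1$ a non-residue, so that $-1\notin H$ and the cosets of $H$ pair off under negation; you instead take an arbitrary prime in $[n,n+n^{0.525}]$ and absorb $-1$ into the lattice by passing from $\ker\phi$ to $M'=\phi^{-1}(\langle-1\rangle)$, handling both cases uniformly. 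Your version is slightly more flexible (no congruence condition on $p$) at the cost of the index bookkeeping for $M'$; the paper's version keeps the lattice equal to $\ker\phi$ throughout. Your shortest-vector argument (a nonzero relation $\prod q_j^{i_j}\equiv\pm1\bmod p$ forces $p$ to divide a nonzero integer of size at most $2(\prod_{q\le k}q)^m$) is a clean variant of the paper's, and correctly accommodates the extra sign coming from $M'$. One small technical point: you count tiles lying \emph{entirely inside} $\mathcal{P}$ and patch the leftover lattice points near $\partial\mathcal{P}$ individually, so you need a bound on the number of lattice points within $O_k(1)$ of $\partial\mathcal{P}$; Lemma~\ref{lem:parallelotope} as stated only bounds the volume of the \emph{outer} $\delta$-neighbourhood of $P$, whereas your set $R$ also includes an inner boundary layer. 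The same face-by-face argument as in the lemma's proof covers this (each $(r-1)$-face has $(r-1)$-volume at most $\prod_{i\ge2}\el{x_i}$), but strictly you need that extension; the paper avoids the issue by instead counting \emph{all} tiles that meet the fundamental domain and bounding them above by $\mathrm{Vol}_r(Q)/k$, which uses only the outer estimate. Neither difference affects correctness.
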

\begin{proof}
By (\ref{E:Ghosh}) we may assume that $k>1$. Let $q_1,q_2,\ldots ,q_r$ be the primes in the set $\{1,2,\ldots,k\}$, where $q_1=2$. Let $p$ be any prime such that $\Leg{-1}{p}=-1$ and $\Leg{q_i}{p}=1$ for all $i\in\{1,2,\ldots,r\}$. A sufficient condition for a prime $p$ to satisfy these character constraints is that $p\equiv-1\bmod 8q_2q_3\cdots q_r$, so by Theorem~\ref{thm:harman} we find that for all sufficiently large $n$ there exists such a prime $p$ with $n\leq p\leq n+\lfloor n^{0.525}\rfloor$. We aim to show that
\[
f_k(p)\leq \frac{1}{k}\binom{p}{2}+O(p^2/\log p).
\]
This suffices to prove the theorem, since we may then deduce that
\begin{align*}
f_k(n)\leq f_k(p)&\leq \frac{1}{k}\binom{n+\lfloor n^{0.525}\rfloor}{2}+O\bigl((n+n^{0.525})^2/\log n\bigr)\\
&=\frac{1}{k}\binom{n}{2}+O(n^2/\log n),
\end{align*}
as required.

Recall the definition of the sets $B_{k,p}(d)$ from Section~\ref{sec:construction}. We will establish the bound we require by constructing a good covering of $\bZ^*_p$ by sets of the form $B_{k,p}(d)$ and then applying Theorem~\ref{thm:main_construction}.

Define the subgroup $H$ of the multiplicative group $\bZ_p^*$ by $H=\langle q_1,q_2,\ldots,q_r\rangle$. Define $\ell=|H|$. We may write $\bZ_p^*$ as the disjoint union of the cosets of $H$:
\[
\bZ_p^*=C_1\cup C_2\cup\cdots \cup C_t,
\]
where $C_i=c_i H$ for some elements $c_i\in\bZ_p^*$. Just as in the proof of Theorem~\ref{thm:2_radius}, we note that $|C_i|=\ell$ for all $i\in\{1,2,\ldots,t\}$ and $t=(p-1)/\ell$.

Since $\Leg{q_i}{p}=1$ for $i\in\{1,2,\ldots,r\}$, we see that $H$ consists of quadratic residues modulo $p$ and so all the elements in a coset share the same Legendre symbol. For any $i\in\{1,2,\ldots,t\}$, $-C_i=C_j$ for some $j\in\{1,2,\ldots,t\}$. Since $\Leg{-1}{p}=-1$, the Legendre symbols of the elements of $C_i$ have opposite sign to the Legendre symbols in $C_j$, and so in particular $i\not=j$. Hence $t$ is even and, by relabelling cosets if necessary, we may assume that $C_{2s+1}=-C_{2s}$ for all $s\in\{1,2,\ldots,t/2\}$.

For $d\in\bZ_p^*$, define
\[
A_{k,p}(d)=d\{1,2,\ldots,k\}=\{d,2d,\ldots,kd\}.
\]
Note that $B_{k,p}(d)=A_{k,p}(d)\cup-A_{k,p}(d)$. We aim to show that there exists an integer $w$ such that $w\le(\ell/k)\bigl(1+O(1/\log p)\bigr)$ with the following property: there exist elements $d_1,d_2,\ldots,d_w\in H$ such that the sets $A_{k,p}(d_i)$ cover $H$. This is sufficient to prove the theorem; we can see this as follows. Since $B_{k,p}(d)=A_{k,p}(d)\cup-A_{k,p}(d)$, the $w$ sets $B_{k,p}(d_i)$ cover $H\cup -H$. So the $wt/2$ sets $B_{k,p}(c_{2s}d_i)$ where $i\in\{1,2,\ldots ,k\}$ and $s\in \{1,2,\ldots,t/2\}$ cover $\bZ_p^*$. Theorem~\ref{thm:main_construction} now implies that
\begin{align*}
f_k(p)&\leq \frac{tw}{2}(p+k-1)+1\\
&\leq \frac{(p-1)w}{2\ell}(p+k)\\
&=\frac{(p-1)\bigl(1+O(1/\log p)\bigr)}{2k}(p+k)\\
&=\frac{1}{k}\binom{p}{2}+O(p^2/\log p),
\end{align*}
as required.

It remains to show the existence of the elements $d_1,d_2,\ldots,d_w\in H$ with the properties we need. Let $L\subseteq \bZ^r$ be such that $\{z+\cC_k:z\in L\}$ is a $\bZ$-tiling of $\bZ^r$. Let $\phi:\bZ^r\rightarrow H$ be the group homomorphism sending the $i$th standard basis vector of $\bZ^r$ to $q_i$ for $i\in\{1,2,\ldots,r\}$. Note that $\bZ^r$ is written additively but $H$ is written multiplicatively, so $\phi(x+y)=\phi(x)\phi(y)$ for all $x,y\in \bZ^r$. Also note that, using the definition of $\cC_k$,
\[
\phi(x+\cC_k)=A_{k,p}\bigl(\phi(x)\bigr).
\]

Let $K=\ker\phi$, so $K$ is a lattice. Since $H$ is generated by $q_1,q_2,\ldots,q_r$, we see that $\phi$ is onto and so $K$ has determinant $\ell$. Let $x_1,x_2,\ldots,x_r$ be a basis for this lattice. We may choose our basis such that
\[
\prod_{i=1}^r \el{x_i}\leq 2^{r(r-1)/4} \det(K)=2^{r(r-1)/4} \ell,
\]
by LLL-reducing this basis if necessary; see Cohen~\cite[Section~2.6]{Cohen}, for example.
Moreover, by reordering the basis if needed, we may assume that $\el{x_1}\leq \el{x_2}\leq\cdots\leq\el{x_r}$.

We claim that the shortest non-zero vector in $K$ has length at least $(\log_k p)/r$, and so in particular $\el{x_1}\geq (\log_k p)/r$. To establish our claim, suppose that the vector $(i_1,i_2,\ldots,i_r)\in K$ has length less than $(\log_k p)/r$. We need to show that this vector is zero. Our bound on the length of the vector implies that $|i_j|<(\log_k p)/r$ for all $j$, and so $\sum_{j=1}^r |i_j|<\log_k p$. Define non-negative integers $i'_j$ and $i''_j$ by
$i'_j=\max(i_j,0)$, $i''_j=-\min(i_j,0)$.  Then
\[
(i'_1,i'_2,\ldots,i'_r)-(i''_1,i''_2,\ldots,i''_r)=(i_1,i_2,\ldots,i_r)\in K
\]
and so
\begin{equation}\label{E:smallvec}
q_1^{i'_1}q_2^{i'_2}\cdots q_r^{i'_r}=\phi\bigl((i'_1,i'_2,\ldots,i'_r)\bigr)=\phi\bigl((i''_1,i''_2,\ldots,i''_r)\bigr)=q_1^{i''_1}q_2^{i''_2}\cdots q_r^{i''_r}\bmod p.
\end{equation}
But both the left hand side and right hand side of the equation above are integers bounded above by $p$. For example,
\[
q_1^{i'_1}q_2^{i'_2}\cdots q_r^{i'_r}\leq k^{\sum_{j=1}^ri'_j}\leq k^{\sum_{j=1}^r|i_j|}<k^{\log_k p}=p.
\]
So no modular reduction can have taken place in (\ref{E:smallvec}) and therefore $i'_j=i''_j$ for $j\in\{1,2,\ldots ,r\}$. This implies that $i_1=i_2=\cdots=i_r=0$, and our claim follows.

Let $R\subseteq \bZ^r$ be defined by
\[
R=\left\{y\in \bZ^r\mid y=\sum_{i=1}^r a_ix_i:0\leq a_i<1\right\}.
\]
Since $R$ is the intersection of $\bZ^r$ with a fundamental region of the lattice $K$, we see that $\phi(R)=H$ (indeed, the restriction of $\phi$ to $R$ is bijective). Define $W$ to be the set of translations $z\in L$ in our tiling such that $(z+\cC_k)\cap R\not=\emptyset$, and define $w=|W|$. We have that
\[
\bigcup_{z\in W}\bigl(\phi(z+\cC_k)\bigr)=\phi\left(\bigcup_{z\in W}(z+\cC_k)\right)\supseteq \phi(R)=H,
\]
and since $\phi(z+\cC_k)=A_{k,p}\bigl(\phi(z)\bigr)$ we have a covering of $H$ consisting of $w$ sets of the form $A_{k,p}(d)$. To prove the theorem, it suffices to show that $w\leq(\ell/k)\bigl(1+O(1/\log p)\bigr)$.

Define $\overline{\cC_k}$ to be the union of those unit hypercubes in $\bZ^r$ whose centres lie in $\cC_k$. Let $\delta$ be the diameter of $\overline{\cC_k}$; note that $\delta$ depends only on $k$. Now, $R$ is contained in the parallelotope $P=P(x_1,x_2,\ldots,x_r)$ of volume $\ell$. Define $Q\subseteq \bR^r$ to be the set of points within a distance $\delta$ of $P$. When $z\in W$, the set $z+\cC_k$ contains a point of $R$ and so $z+\overline{\cC_k}\subseteq Q$. Since $W\subseteq L$ and $L$ is a $\bZ$-tiling of $\cC_k$, the interiors of the sets $z+\overline{\cC_k}$ with $z\in W$ are disjoint, and so $\vol{r}{Q}\geq \sum_{z\in W}\vol{r}{x+\overline{\cC_k}} = wk$. However, Lemma~\ref{lem:parallelotope} implies that
\begin{align*}
\vol{r}{Q}&\leq \vol{r}{P}+O\left(\prod_{i=2}^r\el{x_i}\right)\\
&=\ell+O\left(\frac{1}{\el{x_1}}\prod_{i=1}^r\el{x_i}\right)\\
&=\ell+O(\ell/\log p),
\end{align*}
the last line following since $\prod_{i=1}^r\el{x_i}=O(\ell)$ and $\el{x_1}\geq (\log_k p)/r$. Hence
\[
w\leq \frac{1}{k}\vol{r}{Q}\leq (\ell/k)\bigl(1+O(1/\log p)\bigr),
\]
as required.
\end{proof}

\section{$k$-radius primes}
\label{sec:k_radius_primes}

We now introduce the notion of a $k$-radius prime, and show that $k$-radius primes exist if and only if there exists a special KM-logarithm of length $k$. This allows us to prove (Corollary~\ref{cor:strong_k_radius}) an upper bound on $f_k(n)$ with a better error term than that of Theorem~\ref{thm:k_radius}, provided we are willing to accept a bound for infinitely many values of $n$ rather than for all values. In \S\ref{sec:density} we will determine the density of $k$-radius primes, and this will allow us, for values of $k$ where $k$-radius primes exist, to improve the error term in Theorem~\ref{thm:k_radius} without restricting $n$; see Theorem~\ref{thm:strong_k_upper_bound}.

We say that a prime $p$ is a \emph{$k$-radius prime} if the following two conditions both hold:
\begin{subequations}\begin{align} \bullet & \text{ }p\equiv 1 \bmod 2k\,;\label{E:CC} \\
\bullet & \text{ the elements } 1^{(p-1)/k}, 2^{(p-1)/k},
\ldots , k^{(p-1)/k} \text{ in } \bZ_p^* \text{ are pairwise distinct.}\label{E:EC}\end{align}\end{subequations}

Note that $z^{(p-1)/k}$ is a $k$-th root of unity in $\bZ_p^*$ for any $z\in\bZ_p^*$. So the condition (\ref{E:EC}) is quite a strong one, as there are only $k$ such roots of unity. We remark, for later use, that the congruence condition (\ref{E:CC}) implies that $(p-1)/k$ is even.

\begin{proposition}
\label{prop:k_radius_prime_construction}
Let $p$ be a $k$-radius prime. Then there exists a $p$-ary $k$-radius sequence of length $\big((p-1)/2k\big) (p+k-1)+1$. In particular,
\[
f_k(p)=\frac{1}{k}\binom{p}{2}+O(p)
\]
when $p$ is a $k$-radius prime.
\end{proposition}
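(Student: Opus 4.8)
The plan is to turn a $k$-radius prime $p$ into an exact tiling of $\bZ_p^*$ by $(p-1)/(2k)$ blocks of the form $B_{k,p}(d)$, and then to invoke Theorem~\ref{thm:main_construction} with $|D|=(p-1)/(2k)$.

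First I would restate condition~\eqref{E:EC} in group-theoretic terms. Let $G=(\bZ_p^*)^k$ be the subgroup of $k$th powers; since $k\mid p-1$ and $\bZ_p^*$ is cyclic, $G$ is the unique subgroup of order $(p-1)/k$, and the map $\psi\colon\bZ_p^*\to\bZ_p^*$, $x\mapsto x^{(p-1)/k}$, is a homomorphism onto the group $\mu_k$ of $k$th roots of unity with kernel exactly $G$. For $i,j\in\{1,\dots,k\}$ we have $\psi(i)=\psi(j)$ precisely when $i/j\in G$, so condition~\eqref{E:EC} says exactly that $i/j\in G$ forces $i=j$ for such $i,j$.

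The key step is to check that the translates $A_{k,p}(d)=d\{1,\dots,k\}$, as $d$ runs over $G$, tile $\bZ_p^*$. Indeed, if $id=jd'$ with $i,j\in\{1,\dots,k\}$ and $d,d'\in G$, then $i/j=d'/d\in G$, so $i=j$ and hence $d=d'$; thus these $|G|=(p-1)/k$ sets are pairwise disjoint, and since each has size $k$ their union is all of $\bZ_p^*$. Now the congruence~\eqref{E:CC} enters: it makes $(p-1)/k$ even, so the unique element $-1$ of order $2$ lies in $G$, and therefore $d\mapsto -d$ is a fixed-point-free involution of $G$ (fixed-point-free because $p$ is odd). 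For $d\in G$, the two distinct tiles $A_{k,p}(d)$ and $A_{k,p}(-d)=-A_{k,p}(d)$ have union $B_{k,p}(d)$, so choosing one representative $d$ from each pair $\{d,-d\}\subseteq G$ gives a set $D$ with $|D|=(p-1)/(2k)$ for which $\{B_{k,p}(d):d\in D\}$ partitions $\bZ_p^*$.

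Finally, Theorem~\ref{thm:main_construction} applied to this covering $D$ produces a $p$-ary $k$-radius sequence of length $|D|(p+k-1)+1=(p-1)(p+k-1)/(2k)+1$, which is the claimed length. Writing this as $\tfrac1k\binom p2+(k-1)(p-1)/(2k)+1$ and using that $k$ is fixed shows it equals $\tfrac1k\binom p2+O(p)$; together with the lower bound $\tfrac1k\binom p2<f_k(p)$ of Lemma~\ref{lem:lower_bound} this gives $f_k(p)=\tfrac1k\binom p2+O(p)$. I expect the tiling step to be the heart of the matter: injectivity~\eqref{E:EC} is exactly what makes the translates $A_{k,p}(d)$ disjoint, and the congruence~\eqref{E:CC} is exactly what lets these be folded into the symmetric blocks $B_{k,p}(d)$; everything after that is bookkeeping.
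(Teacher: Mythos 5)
Your proof is correct and is essentially the paper's argument in slightly more structural clothing: the paper takes $D=\{\alpha^{ki}:0\le i<(p-1)/2k\}$ for a primitive root $\alpha$, which is precisely a transversal of $\{\pm1\}$ in your subgroup $G$ of $k$th powers, and its disjointness computation (raising a putative coincidence to the power $(p-1)/k$, using \eqref{E:EC} to force $r_1=r_2$ and the parity of $(p-1)/k$ from \eqref{E:CC} to kill the sign) is the same mechanism you package as the exact tiling of $\bZ_p^*$ by the sets $A_{k,p}(d)$, $d\in G$. The counting, the appeal to Theorem~\ref{thm:main_construction}, and the final bookkeeping coincide with the paper's.
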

\begin{proof}
Let $\alpha$ be a primitive root modulo $p$. Define $D\subseteq \bZ_p^*$ by
\[
D=\{\alpha^{ki}:0\leq i< (p-1)/2k\}.
\]
We claim that the sets $B_{k,p}(d)$ where $d\in D$ cover $\bZ_p^*$. Since $D\subseteq \bZ_p^*$ has size $(p-1)/2k$ the proposition will then follow by Theorem~\ref{thm:main_construction}. It is sufficient to show that our sets are disjoint, for then their union has size $2k|D|=p-1$. Suppose, for a contradiction, that there exist 
$0\le i_1<i_2<(p-1)/2k$ such that $B_{k,p}(\alpha^{ki_1})\cap B_{k,p}(\alpha^{ki_2})\not=\emptyset$.
By the definition of $B_{k,p}(d)$, there exist $r_1,r_2\in\{1,2,\ldots,k\}$ such that
$
r_1\alpha^{ki_1}\equiv\pm r_2\alpha^{ki_2}\bmod{p}.
$
Raising both sides of this equation to the power $(p-1)/k$ gives the equation $r_1^{(p-1)/k}\equiv(\pm 1)^{(p-1)/k}r_2^{(p-1)/k}\bmod{p}$. Since $p$ is a $k$-radius prime, the congruence condition (\ref{E:CC}) implies that $(p-1)/k$ is even. Hence 
$r_1^{(p-1)/k}\equiv r_2^{(p-1)/k}\bmod{p}$. But now the condition (\ref{E:EC}) implies that $r_1=r_2$, and hence that $\alpha^{ki_1}\equiv\pm \alpha^{ki_2}\bmod{p}$. If $\alpha^{ki_1}\equiv\alpha^{ki_2}\bmod{p}$ then clearly $i_1=i_2$ and we have our contradiction. If $\alpha^{ki_1}\equiv-\alpha^{ki_2}\bmod{p}$ then
\[
ki_1-ki_2\equiv(p-1)/2\bmod (p-1).
\]
Again we have our desired contradiction, since
our bounds on $i_1$ and $i_2$ imply that $k(i_1-i_2)$ has absolute value less than $(p-1)/2$.
\end{proof}

The next theorem provides a connection between $k$-radius primes and special KM-logarithms. It shows us that for infinitely many values of $k$ there exist $k$-radius primes.  In particular, if $2k+1$ is prime, or if $k$ is divisible by $8$ and $k+1$ is prime, then there exist infinitely many $k$-radius primes.

\begin{theorem}\label{thm:stronglog_iff_k_radius}
If there is a special KM-logarithm of length $k$, then there are infinitely many $k$-radius primes.  Conversely, if there is a $k$-radius prime then there is a special KM-logarithm of length $k$.
\end{theorem}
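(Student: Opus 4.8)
The plan is to translate the two conditions defining a $k$-radius prime into the language of logarithms and then match them against the conditions defining a special KM-logarithm. Fix a prime $p\equiv 1\bmod k$ and a primitive $k$th root of unity $\zeta\in\bZ_p^*$, and let $f_p\colon\{1,\dots,k\}\to\bZ_k$ be the logarithmic function with $x^{(p-1)/k}\equiv\zeta^{f_p(x)}\bmod p$. Two observations drive everything. (a)~The elements $1^{(p-1)/k},\dots,k^{(p-1)/k}$ are pairwise distinct (condition~\eqref{E:EC}) if and only if $f_p$ is injective, hence---since domain and codomain both have size $k$---bijective; in that case $p$ exhibits $f_p$ as a KM-logarithm by definition. (b)~When $k$ is even, $f_p(m)$ is even if and only if $(m^{(p-1)/k})^{k/2}=1$, i.e.\ $m^{(p-1)/2}\equiv 1\bmod p$, i.e.\ $\Leg{m}{p}=1$; so $f_p$ is a special KM-logarithm exactly when $\Leg{m}{p}=1$ for every $m$ dividing $k/2$. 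Neither property of $f_p$ depends on the choice of $\zeta$, since a change of $\zeta$ replaces $f_p$ by a unit multiple in $\bZ_k$. Finally, when $k$ is odd the statement is nearly immediate: $p$ is an odd prime, so $k\mid p-1$ already forces $2k\mid p-1$, so~\eqref{E:CC} is automatic, and by Theorem~\ref{thm:Mills} a logarithm of odd length is automatically a (special) KM-logarithm.

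For the easy implication, let $p$ be a $k$-radius prime. Then~\eqref{E:EC} makes $f_p$ a logarithm, and if $k$ is even I verify it is special using~\eqref{E:CC}: from $p\equiv 1\bmod 2k$ we get $p\equiv 1\bmod 4$, and $p\equiv 1\bmod 8$ whenever $4\mid k$. Since $\Leg{\cdot}{p}$ and $f_p$ are multiplicative, it suffices to treat prime $m\mid k/2$: for $m=2$ (possible only when $4\mid k$) use $p\equiv 1\bmod 8$, and for odd $m$ use $p\equiv 1\bmod m$ together with quadratic reciprocity, which gives $\Leg{m}{p}=\Leg{p}{m}=\Leg{1}{m}=1$ because $p\equiv 1\bmod 4$. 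Hence $f_p$ is a special KM-logarithm of length $k$, and the existence of a $k$-radius prime forces the existence of a special KM-logarithm.

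For the converse, suppose $f$ is a special KM-logarithm of length $k$. As every special KM-logarithm is a KM-logarithm, Theorem~\ref{thm:Mills} gives infinitely many primes $p$ exhibiting $f$; each satisfies $p\equiv 1\bmod k$ and, since $f$ is a logarithm, satisfies~\eqref{E:EC}. For $k$ odd these are already $k$-radius primes. For $k$ even I must in addition force the single extra $2$-adic congruence $p\equiv 1\bmod 2^{v_2(k)+1}$. The idea is Chebotarev: ``$p$ exhibits $f$'' is a Frobenius condition in the field $M$ obtained from $\mathbb{Q}(\zeta_k)$ by adjoining radicals of certain positive integers built from $q_1,\dots,q_r$ (it records $p\equiv 1\bmod k$, that the designated $m\in\{1,\dots,k\}$ with $f(m)=1$ is a $q$th-power non-residue for each prime $q\mid k$, and that each $x\,m^{-f(x)}$ is a $k$th-power residue), whereas $p\equiv 1\bmod 2k$ asks that $p$ split completely in $\mathbb{Q}(\zeta_{2k})$. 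A conductor computation shows that $\mathbb{Q}(\zeta_{2k})\not\subseteq M$ in the typical case (always when $8\mid k$, and when $k\equiv 2\bmod 4$ unless every prime factor of $k/2$ is $\equiv1\bmod4$); there the two conditions are transverse and Chebotarev in $M\cdot\mathbb{Q}(\zeta_{2k})$ produces infinitely many primes meeting both. In the remaining cases $\mathbb{Q}(\zeta_{2k})\subseteq M$, so the infinitely many primes exhibiting $f$ all lie in one residue class modulo $2k$---either $1$ or $k+1$---and here ``special'' is exactly what excludes the class $k+1$: a prime $p\equiv k+1\bmod 2k$ would have $\Leg{m}{p}=-1$ for a suitable divisor $m$ of $k/2$ (for instance $m=2$ when $k\equiv 4\bmod 8$, since then $p\equiv 5\bmod 8$; or a prime $m\equiv 3\bmod4$ dividing $k/2$ when $k\equiv2\bmod4$), contradicting observation~(b). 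Thus the primes exhibiting $f$ are $k$-radius primes.

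I expect the main obstacle to be this last part of the converse: identifying exactly when $\mathbb{Q}(\zeta_{2k})$ is visible inside the radical field $M$, and checking in that case that the parity conditions built into ``special'' force the Frobenius of the Mills primes to be trivial on $\mathbb{Q}(\zeta_{2k})$. The remaining ingredients---the dictionary in observations~(a) and~(b) and the quadratic-reciprocity computation of the easy implication---are routine.
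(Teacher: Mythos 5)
Your converse direction (a $k$-radius prime yields a special KM-logarithm) is correct and is essentially the paper's argument: condition~\eqref{E:EC} makes $f_p$ a bijection, and condition~\eqref{E:CC} plus quadratic reciprocity (exactly as in Lemma~\ref{lem:stronglogs}) gives $\Leg{m}{p}=1$, hence $f_p(m)$ even, for every $m\mid k/2$. Your observation~(b), that $f_p(m)$ is even iff $\Leg{m}{p}=1$, is also the right dictionary. The case $k$ odd of the forward direction likewise matches the paper.

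The forward direction for $k$ even is where your proposal has a genuine gap. You apply Mills with modulus $k$ and then try to force the extra congruence $p\equiv 1\bmod 2k$ by a Chebotarev argument in a compositum, splitting into a ``transverse'' case and a case $\mathbb{Q}(\zeta_{2k})\subseteq M$. Neither branch is actually established: the field $M$ and the asserted conductor computation are never pinned down (note that ``$p$ exhibits $f$'' quantifies over the choice of $\zeta$, so it is a union of Frobenius classes whose restrictions to $\mathbb{Q}(\zeta_{2k})$ need not coincide, which already threatens your ``all such $p$ lie in a single class mod $2k$'' claim). Worse, in the non-transverse branch your contradiction fails in the very subcase you assign to it: for $k\equiv 2\bmod 4$ with every prime factor of $k/2$ congruent to $1\bmod 4$ (e.g.\ $k=10$), a prime $p\equiv k+1\bmod 2k$ satisfies $p\equiv 1\bmod m$ for each such $m$, and reciprocity gives $\Leg{m}{p}=\Leg{p}{m}=1$; there is no divisor $m$ of $k/2$ with $\Leg{m}{p}=-1$, and in particular no prime $m\equiv 3\bmod 4$ dividing $k/2$ as your parenthesis requires. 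So the specialness of $f$ does not, by your route, exclude the class $k+1$ there. The paper avoids all of this with a much simpler device: lift $f$ to a logarithmic function $\tilde f:\{1,\dots,2k\}\to\bZ_{2k}$ agreeing with $f$ modulo $k$ on the primes up to $k$. Since $k$ is even, $\tilde f(m)$ and $f(m)$ have the same parity for $m\le k$, so the specialness condition ($f(m)$ even for $m\mid k/2$) is \emph{exactly} the Mills parity condition for modulus $2k$ (namely $\tilde f(m)$ even for $m\mid 2k/4$). Mills' theorem applied with modulus $2k$ then directly produces infinitely many primes $p\equiv 1\bmod 2k$ with $a^{(p-1)/2k}\equiv z_{2k}^{\tilde f(a)}$, and \eqref{E:EC} follows because $a^{(p-1)/k}\equiv b^{(p-1)/k}$ forces $\tilde f(a)\equiv\tilde f(b)\bmod k$, i.e.\ $f(a)=f(b)$, i.e.\ $a=b$. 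You should replace your Chebotarev sketch with this lifting argument.
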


\begin{proof}
Let $f:\{1,\dots,k\}\rightarrow\mathbb{Z}_k$ be a special KM-logarithm.

If $k$ is odd, then Theorem~\ref{thm:Mills} gives infinitely many primes $p\equiv 1\bmod{k}$ satisfying the condition (\ref{E:EC}), and with $k$ odd these automatically satisfy the congruence condition (\ref{E:CC}) too.

If $k$ is even, then define $\tilde{f}:\{1,\dots,2k\}\rightarrow \mathbb{Z}_{2k}$ to be any logarithmic function that agrees with $f$ modulo $k$ on the primes between $2$ and $k$.  In particular, we then have that $\tilde{f}(a)\equiv f(a) \bmod{k}$ for $1\le a \le k$.  Since $f$ is a special KM-logarithm, the values of $\tilde{f}$ provide legitimate character values mod $2k$ for Theorem~\ref{thm:Mills} to apply:  there exist infinitely many primes $p\equiv 1 \pmod{2k}$ such that $a^{(p-1)/2k}\equiv z_{2k}^{\tilde{f}(a)}\bmod{p}$ for $1\le a\le k$, where $z_{2k}$ is some primitive $2k$-th root of unity in $\mathbb{Z}_p^*$.  We certainly have the congruence condition (\ref{E:CC}), but now we must check the condition (\ref{E:EC}).  Suppose that $a^{(p-1)/k} \equiv b^{(p-1)/k} \bmod{p}$.  Then $a^{(p-1)/2k}\equiv\pm b^{(p-1)/2k}\bmod{p}$, so $z_{2k}^{\tilde{f}(a)} \equiv \pm z_{2k}^{\tilde{f}(b)}\bmod{p} $.  Since $-1\equiv z_{2k}^k\bmod{p}$, we deduce that $\tilde{f}(a)\equiv \tilde{f}(b) \bmod{k}$. But $\tilde{f}$ agrees with $f$ mod $k$, and $f$ is a logarithm, so we must have $a=b$.

Conversely, suppose that $p$ is a $k$-radius prime, and let $z_k$ be a primitive $k$-th root of unity in $\bZ_p^*$.  Define $f:\{1,\dots,k\}\rightarrow \bZ_k$ by $a^{(p-1)/k} \equiv z_k^{f(a)}\bmod{p}$.  Then $f$ is certainly a logarithm.  We now use (\ref{E:CC}) to proceed via quadratic reciprocity (as in the proof of Lemma \ref{lem:stronglogs}) to verify that $f$ is a special KM-logarithm.

\end{proof}

\begin{corollary}
\label{cor:strong_k_radius}
Let $k$ be a positive integer. Suppose there exists a special KM-logarithm of length $k$. \emph{(}Recall that this assumption holds when $2k+1$ is prime, or when $k+1$ is prime and $8|k$.\emph{)} Then
$f_k(n)=\frac{1}{k}\binom{n}{2}+O(n)$ for infinitely many values of $n$.
\end{corollary}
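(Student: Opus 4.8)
The plan is to chain together the two main results of this section. Since there is a special KM-logarithm of length $k$, Theorem~\ref{thm:stronglog_iff_k_radius} furnishes an infinite set $P$ of $k$-radius primes. For each $p\in P$, Proposition~\ref{prop:k_radius_prime_construction} shows there is a $p$-ary $k$-radius sequence of length $\big((p-1)/2k\big)(p+k-1)+1$, so
\[
f_k(p)\leq \frac{p-1}{2k}(p+k-1)+1 = \frac{1}{k}\binom{p}{2}+O(p),
\]
the implied constant depending only on the fixed integer $k$. Combining this with the lower bound $\frac{1}{k}\binom{p}{2}<f_k(p)$ of Lemma~\ref{lem:lower_bound} gives $f_k(p)=\frac{1}{k}\binom{p}{2}+O(p)$. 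Taking $n=p$ as $p$ ranges over $P$ then yields infinitely many $n$ with $f_k(n)=\frac{1}{k}\binom{n}{2}+O(n)$, which is the assertion of the corollary.

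There is essentially no obstacle beyond assembling the pieces, since all the substantive work is already done: Theorem~\ref{thm:stronglog_iff_k_radius} carries out the number theory that produces $k$-radius primes (via Mills' theorem and quadratic reciprocity), and Proposition~\ref{prop:k_radius_prime_construction} carries out the covering argument, exhibiting a set $D$ of $(p-1)/2k$ powers of a primitive root whose translates $B_{k,p}(d)$ in fact partition $\bZ_p^*$. The one point deserving a word of care is that the $O(p)$ error in Proposition~\ref{prop:k_radius_prime_construction}, and hence the $O(n)$ here, carries an implicit constant depending on $k$; but $k$ is fixed throughout, so this is harmless. For the parenthetical remark, Lemma~\ref{lem:stronglogs} records that a special KM-logarithm of length $k$ exists whenever $2k+1$ is prime, or whenever $k+1$ is prime and $8\mid k$, so the corollary applies in particular in those cases.
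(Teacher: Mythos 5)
Your proof is correct and is exactly the argument the paper intends: the corollary is stated without proof precisely because it follows immediately by combining Theorem~\ref{thm:stronglog_iff_k_radius} (infinitely many $k$-radius primes) with Proposition~\ref{prop:k_radius_prime_construction} and the lower bound of Lemma~\ref{lem:lower_bound}. Nothing is missing.
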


\section{The density of $k$-radius primes}
\label{sec:density}

We believe the notion of a $k$-radius prime is a very natural one. So it is of interest to determine the density of such primes; and this can be done, using some of the theory of cyclotomic field extensions.  For technical background material, see \cite{IR}. As an application of our result on the density of $k$-radius primes, we improve the error term in Theorem \ref{thm:k_radius} for values of $k$ such that special KM-logarithms exist.

\begin{theorem}
\label{th:density}
Let $k$ be a fixed positive integer, and let $f_\textrm{spec}(k)$ be the number of special KM-logarithms of length $k$. Let
\begin{equation}\label{E:density}
c_k =
\begin{cases}
\displaystyle\frac{1}{\varphi(2k)} \cdot \frac{f_{\mathrm{spec}}(k)}{k^{\pi(k)}} & {\rm if\ }$k${\rm\ is\ odd,}\\
\\
\displaystyle\frac{1}{\varphi(2k)} \cdot \frac{f_{\mathrm{spec}}(k)2^{\omega(k/2)}}{k^{\pi(k)}} & {\rm if\ }$k${\rm\ is\ even.}
\end{cases}
\end{equation}
There exists a positive constant $A_k$ such that
the number of $k$-radius primes less than or equal to $x$ is
\[
c_k\frac{x}{\log x} + O\left(x\exp(-A_k\sqrt{\log x})\right)\,,
\]
as $x\rightarrow\infty$, where the implied constant depends on $k$.
\end{theorem}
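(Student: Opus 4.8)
The plan is to recognise the property of being a $k$-radius prime as a splitting (Frobenius) condition in one fixed number field, and then to invoke the effective form of the Chebotarev density theorem. Since the congruence \eqref{E:CC} forces $k\mid p-1$, the map $x\mapsto x^{(p-1)/k}$ on $\bZ_p^*$ has kernel exactly the group of $k$-th powers, so the distinctness condition \eqref{E:EC} says precisely that none of the rationals $b/a$ with $1\le a<b\le k$ is a $k$-th power modulo $p$. Let $q_1,\dots,q_r$ be the primes up to $k$ and put
\[
L=\mathbb{Q}\bigl(\zeta_{2k},\sqrt[k]{q_1},\dots,\sqrt[k]{q_r}\bigr),
\]
a finite Galois extension of $\mathbb{Q}$ (the splitting field of $(x^{2k}-1)\prod_i(x^k-q_i)$), with $G=\mathrm{Gal}\bigl(L/\mathbb{Q}(\zeta_{2k})\bigr)$. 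For every prime $p$ unramified in $L$ (all but finitely many), \eqref{E:CC} holds iff $p$ splits completely in $\mathbb{Q}(\zeta_{2k})$, iff $\mathrm{Frob}_p\in G$; and writing $\mathrm{Frob}_p(\sqrt[k]{q_i})=\zeta_k^{g_i}\sqrt[k]{q_i}$ with $\zeta_k=\zeta_{2k}^2$, the map $f_p\colon\prod_iq_i^{e_i}\mapsto\sum_ie_ig_i$ is exactly the logarithmic function attached to $p$ in \S\ref{sec:k_radius_primes}, and \eqref{E:EC} becomes the statement that $f_p$ is injective, hence bijective. Thus $p$ is a $k$-radius prime iff $\mathrm{Frob}_p$ lies in $T:=\{\sigma\in G: f_\sigma\text{ is a bijection}\}$, where $\sigma\mapsto f_\sigma$ is the injective group homomorphism from $G$ into the group of logarithmic functions of length $k$ defined in the same way. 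Since $T$ is determined by splitting behaviour in the Galois subextensions $\mathbb{Q}(\zeta_{2k},\sqrt[k]{b/a})$ of $L$, it is a union of conjugacy classes of $\mathrm{Gal}(L/\mathbb{Q})$.

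Next I would apply the unconditional effective Chebotarev density theorem of Lagarias and Odlyzko to the fixed field $L$ and the union of conjugacy classes $T$, obtaining
\[
\#\{k\text{-radius primes}\le x\}=\frac{|T|}{[L:\mathbb{Q}]}\,\mathrm{Li}(x)+O\bigl(x\exp(-A_k\sqrt{\log x})\bigr),
\]
with $A_k$ and the implied constant depending only on $L$, hence only on $k$ (any exceptional real zero contributes an error $O(x^{\beta})$ for a fixed $\beta<1$, absorbed by the error term); here $[L:\mathbb{Q}]=\varphi(2k)\,|G|$ and $\mathrm{Li}(x)=x/\log x+O\bigl(x/(\log x)^2\bigr)$. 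So the theorem reduces to the identity $\dfrac{|T|}{\varphi(2k)\,|G|}=c_k$.

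To prove that identity I would compute $|G|$ and $|T|$ explicitly. The image $f(G)$ is a subgroup of order $|G|$ of the group of logarithmic functions of length $k$. First, every $f_\sigma$ is a \emph{special KM-logarithmic function}: choosing (by Chebotarev) a prime $p\equiv1\bmod 2k$ with $\mathrm{Frob}_p=\sigma$, for each prime $m\mid k/2$ one has $4m\mid 2k$, so $\Leg{m}{p}=1$ by quadratic reciprocity, and raising $m^{(p-1)/k}\equiv\zeta_k^{f_p(m)}$ to the power $k/2$ forces $f_p(m)$ even — exactly the argument of Lemma~\ref{lem:stronglogs}. Hence $f(G)$ lies in the group $\Lambda$ of special KM-logarithmic functions, and counting the defining parity constraints gives $|\Lambda|=k^{\pi(k)}$ when $k$ is odd and $|\Lambda|=k^{\pi(k)}/2^{\omega(k/2)}$ when $k$ is even. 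To get $f(G)=\Lambda$ it then suffices to show $|G|=[L:\mathbb{Q}(\zeta_{2k})]$ equals this, which by Kummer theory amounts to determining exactly which products $\prod_iq_i^{e_i}$ are $k$-th powers in $\mathbb{Q}(\zeta_{2k})$: a prime $q_i\nmid k$ is unramified there and so forces $k\mid e_i$; and because $\mathbb{Q}(\sqrt[k]{\prod_iq_i^{e_i}})$, as a subfield of the abelian field $\mathbb{Q}(\zeta_{2k})$, is abelian over $\mathbb{Q}$ — which for a radical extension forces degree at most $2$ — the integer $\prod_iq_i^{e_i}$ must be a $k$-th power in $\mathbb{Q}$ (if $k$ is odd) or a $(k/2)$-th power of some rational $s$ (if $k$ is even) with the residual requirement $\sqrt s\in\mathbb{Q}(\zeta_{2k})$; a conductor-divisibility count of the squarefree $s$ supported on $q_1,\dots,q_r$ yields exactly $2^{\omega(k/2)}$ possibilities when $k$ is even and only the trivial one when $k$ is odd. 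So $|G|=|\Lambda|$, hence $f(G)=\Lambda$, so $T$ corresponds to the bijective elements of $\Lambda$, i.e. to the special KM-logarithms, giving $|T|=f_{\mathrm{spec}}(k)$ and
\[
\frac{|T|}{\varphi(2k)\,|G|}=\frac{f_{\mathrm{spec}}(k)}{\varphi(2k)\,|\Lambda|}=c_k,
\]
as required. (One could instead deduce $f(G)=\Lambda$ from the analogue for logarithmic functions of Mills' Theorem~\ref{thm:Mills}, bypassing the cyclotomic count at the price of extending Mills' result.)

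The main obstacle is this last cyclotomic computation — the precise determination of which rationals built from $q_1,\dots,q_r$ acquire $k$-th roots inside $\mathbb{Q}(\zeta_{2k})$ — which is exactly where the dichotomy between odd and even $k$ and the factor $2^{\omega(k/2)}$ originate; everything else is routine given the machinery set up above.
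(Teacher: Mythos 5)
Your argument is correct, and it reaches the theorem by a genuinely different route from the paper's. The paper's proof quotes Elliott's theorem on the joint distribution of the power residue symbols $\left(\frac{1}{\mathfrak{p}}\right)_k,\dots,\left(\frac{k}{\mathfrak{p}}\right)_k$ (the $2k$-th power symbols when $k$ is even) to get equidistribution over the set $S$ of value-vectors that actually occur, and then invokes Mills' theorem to identify $S$ with the set of special KM-logarithmic functions, so that the density is (number of bijective vectors)/(size of $S$). You instead encode the $k$-radius condition as a Frobenius condition in the explicit Kummer extension $L=\mathbb{Q}(\zeta_{2k},\sqrt[k]{q_1},\dots,\sqrt[k]{q_r})$, apply effective Chebotarev, and replace the appeal to Mills by a direct Kummer-theoretic computation of $[L:\mathbb{Q}(\zeta_{2k})]$: the only products $\prod_i q_i^{e_i}$ that become $k$-th powers in $\mathbb{Q}(\zeta_{2k})$ are rational $k$-th powers times $s^{k/2}$ with $s$ squarefree and $\sqrt{s}\in\mathbb{Q}(\zeta_{2k})$, and the conductor count ($\sqrt{q}\in\mathbb{Q}(\zeta_{2k})$ for a prime $q$ exactly when $q\mid k/2$, for $k$ even) is precisely where the factor $2^{\omega(k/2)}$ originates. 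The two proofs are structurally parallel — Elliott's theorem is under the hood a Chebotarev statement for this same Kummer extension, and Mills' theorem encodes the same cyclotomic index computation — but yours is more self-contained on the algebraic side, makes the source of $2^{\omega(k/2)}$ transparent, and treats odd and even $k$ uniformly, at the price of having to carry out the abelian-radical-extension and conductor arguments yourself and to check conjugation-stability of $T$ (which holds because conjugating $\sigma$ by an automorphism sending $\zeta_k\mapsto\zeta_k^c$ replaces $f_\sigma$ by $cf_\sigma$ with $c\in\bZ_k^*$, preserving bijectivity). One blemish, shared with the paper's own statement and proof: since $\mathrm{Li}(x)-x/\log x\sim x/(\log x)^2$ dominates $x\exp(-A_k\sqrt{\log x})$, the main term should strictly be written as $c_k\,\mathrm{Li}(x)$ (or $c_k\pi(x)$, which is what Elliott's theorem actually delivers); this does not affect Corollary~\ref{C:gaps} or any other application.
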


Heuristically the main term is clear: the fraction $1/\varphi(2k)$ gives the proportion of primes satisfying (\ref{E:CC}); the remaining fraction gives the proportion of special KM-logarithmic functions that are special KM-logarithms (when $k$ is even, the parity constraint on each prime dividing $k/2$ means that the number of special KM-logarithmic functions is only $k^{\pi(k)}/2^{\omega(k/2)}$ rather than $k^{\pi(k)}$).  We shall make use of a theorem of Elliott \cite[Theorem 1]{elliott}.  See also \cite{lenstra} for a discussion of similar density results, some of these conditional on the generalized  Riemann Hypothesis.

\begin{proof}
Suppose first that $k$ is odd.  Let $\zeta_k = \exp(2\pi i/k)$.  We work in the cyclotomic field $\mathbb{Q}(\zeta_k)$, and associate any logarithmic function $f:\{1,\dots,k\}\rightarrow\bZ_k$ with the vector $\varepsilon_f = (\zeta_k^{f(1)},\zeta_k^{f(2)},\dots,\zeta_k^{f(k)})$.

If $p\equiv 1\bmod{k}$, then $p$ splits completely in $\mathbb{Q}(\zeta_k)$ as a product of $\varphi(k)$ degree-$1$ prime ideals.  Apart from a finite number of ramified primes, all degree-$1$ prime ideals in $\mathbb{Q}(\zeta_k)$ arise in this way.  For any degree-$d$ prime ideal $\mathfrak{p}$, the $k$-th power residue symbol $\left(\frac{\beta}{\mathfrak{p}}\right)_k$ is defined for $\beta\in\bZ[\zeta_k]$ to be zero if $\mathfrak{p}\mid\beta$, and otherwise is the unique power of $\zeta_k$ that is congruent to $\beta^{(p^d-1)/k}\bmod\mathfrak{p} $.  We can associate to any prime ideal $\mathfrak{p}$ the vector  $\varepsilon_\mathfrak{p}=\left( \left(\frac{1}{\mathfrak{p}}\right)_k,\dots, \left(\frac{k}{\mathfrak{p}}\right)_k \right)$.  Let $S$ be the finite set of all vectors that arise as $\varepsilon_\mathfrak{p}$ for some degree-$1$ prime ideal $\mathfrak{p}$ dividing a rational unramified prime $p$.
For a degree-$1$ prime ideal $\mathfrak{p}$ dividing $p\equiv1\bmod{k}$, we observe that the powers of
$\zeta_k$ appearing in $\varepsilon_{\mathfrak{p}}$ correspond to the powers of some primitive $k$th root of unity modulo $p$ that appear in the sequence in (\ref{E:EC}).
The different prime ideals dividing $p$ correspond to different choices of the $k$th root of unity.
For condition (\ref{E:EC}) to hold we require the entries of $\varepsilon_\mathfrak{p}$ to be distinct.

Given any vector $\varepsilon=(\varepsilon_1,\dots,\varepsilon_k)$ whose entries are powers of $\zeta_k$, Elliott's theorem \cite[Theorem 1]{elliott} gives an asymptotic estimate for the number of prime ideals $\mathfrak{p}$ with norm less than or equal to $x$ such that $\varepsilon_\mathfrak{p}=\varepsilon$, namely
\[
\frac{N(k,k)}{k^k}\pi(x) + O\left(x \exp(-A\sqrt{\log x})\right)\,,
\]
for some positive constant $A$, where
\begin{equation}\label{E:Nkk}
N(k,k) = \sum \left(\varepsilon_1^{\nu_1}\dots\varepsilon_k^{\nu_k}\right)^{-1}\,,
\end{equation}
and where the sum in (\ref{E:Nkk}) is over all $1\le\nu_1,\dots,\nu_k\le k$ such that
$
1^{\nu_1}\cdots k^{\nu_k} = \beta^k
$
for some $\beta\in\bZ[\zeta_k]$.
Since the number of prime ideals of degree greater than $1$ and norm less than or equal to $x$ is dominated by the error term in this estimate, we have the same asymptotic formula if we restrict to degree-$1$ prime ideals.
Now we observe that if $\varepsilon=\varepsilon_\mathfrak{p}$ is in $S$, then for each term in (\ref{E:Nkk}) we have
\[
\varepsilon_1^{\nu_1}\dots\varepsilon_k^{\nu_k} =  \left(\frac{1}{\mathfrak{p}}\right)_k^{\nu_1}\dots \left(\frac{k}{\mathfrak{p}}\right)_k^{\nu_k}  = \left(\frac{\beta^k}{\mathfrak{p}}\right)_k = 1\,.
\]
Hence each $\varepsilon\in S$ gives us the same main term in the asymptotic estimate for the number of degree-$1$ prime ideals of norm less than or equal to $x$ such that $\varepsilon_\mathfrak{p}=\varepsilon$, and with the same form for the error term.  Now Mills' thoerem \cite[Theorem 3]{Mills} tells us that the $\varepsilon\in S$ are precisely those that correspond to (special KM-) logarithmic functions (as we are assuming $k$ is odd there are no parity constraints), hence amongst all degree-$1$ prime ideals the proportion that yield special KM-logarithms (corresponding to $k$-radius primes) is precisely $f_{\mathrm{spec}}(k)/k^{\pi(k)}$.

For $k$ even there are a few twists, but essentially the same argument works.  We now work in $\mathbb{Q}(\zeta_{2k})$.  For each special KM-logarithmic function $f:\{1,\dots,k\}\rightarrow \bZ_k$ we associate a set of $2^{\pi(k)}$ vectors $(\varepsilon_1,\dots,\varepsilon_k)$, where each $\varepsilon_i$ is a power of $\zeta_{2k}$ such that for primes $q$ between $1$ and $k$ we have either $\varepsilon_q = \zeta_{2k}^{f(q)}$ or $\varepsilon_q = \zeta_{2k}^{f(q)+k} = -\zeta_{2k}^{f(q)}$, extended logarithmically to all other $\varepsilon_j$.  The parity conditions for $f$ being a special KM-logarithmic function are precisely those needed for Mills' theorem to tell us that for any of these $2^{\pi(k)}$ vectors $\varepsilon$ there exist infinitely many degree-$1$ prime ideals $\mathfrak{p}\in \mathbb{Q}(\zeta_{2k})$ such that $\varepsilon=\varepsilon_\mathfrak{p}=\left( \left(\frac{1}{\mathfrak{p}}\right)_{2k},\dots, \left(\frac{k}{\mathfrak{p}}\right)_{2k} \right)$.  Bearing in mind these parity conditions, we have $2^{\pi(k)}k^{\pi(k)}/2^{\omega(k/2)}$ vectors associated to special KM-logarithmic functions, and the proportion of degree-$1$ prime ideals that correspond to special KM-logarithms is $f_\text{spec}(k)2^{\omega(k/2)}/k^{\pi(k)}$.  The rest of the argument proceeds as before, using $N(2k,k)$ rather than $N(k,k)$ (the $\nu_j$ run between $1$ and $2k$).
\end{proof}

\begin{corollary}
The proportion of all primes less than or equal to $x$ that are $k$-radius primes is $c_k$, defined by \eqref{E:density}.
\end{corollary}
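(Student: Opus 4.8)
The plan is to normalise the count provided by Theorem~\ref{th:density} by the total number of primes up to $x$. By definition the quantity we want is
\[
\frac{\#\{p\le x : p \text{ is a }k\text{-radius prime}\}}{\pi(x)},
\]
and Theorem~\ref{th:density} already evaluates the numerator as $c_k\frac{x}{\log x} + O\bigl(x\exp(-A_k\sqrt{\log x})\bigr)$, so the only remaining task is to control the denominator.

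For this I would invoke the Prime Number Theorem in the strong form $\pi(x) = \mathrm{li}(x) + O\bigl(x\exp(-c\sqrt{\log x})\bigr)$ for some absolute $c>0$; in particular $\pi(x) = \tfrac{x}{\log x}\bigl(1 + O(1/\log x)\bigr)$, so that, after possibly decreasing $A_k$, the numerator may be rewritten as $c_k\,\pi(x) + O\bigl(x\exp(-A_k\sqrt{\log x})\bigr)$, the replacement of $\tfrac{x}{\log x}$ by $\pi(x)$ costing only an extra term of size $O\bigl(x/(\log x)^2\bigr)$ which is absorbed into the de la Vallée Poussin--type error. Dividing by $\pi(x)\gg x/\log x$ then shows the displayed ratio equals $c_k + O\bigl((\log x)\exp(-A_k\sqrt{\log x})\bigr) = c_k + o(1)$, and letting $x\to\infty$ gives the claim. (When $f_{\mathrm{spec}}(k)=0$, so $c_k=0$, the statement just asserts that the proportion tends to $0$, which is again consistent with the above.)

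I expect no genuine obstacle here: the corollary is a formal consequence of Theorem~\ref{th:density} combined with the Prime Number Theorem, the only bookkeeping being the passage from the main term $\tfrac{x}{\log x}$ to $\pi(x)$ and the routine check that the error term retains its shape under this substitution and under division by $\pi(x)$.
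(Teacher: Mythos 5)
Your argument is correct and is exactly the intended (and only) route: the paper treats this corollary as immediate from Theorem~\ref{th:density}, obtained by dividing the stated count by $\pi(x)\sim x/\log x$ and noting that the error term $O\bigl(x\exp(-A_k\sqrt{\log x})\bigr)$ is $o(x/\log x)$. Your extra care with the strong form of the Prime Number Theorem is harmless but not needed; the asymptotic $\pi(x)\sim x/\log x$ already suffices.
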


To illustrate this corollary, Table \ref{tab:densities} compares the predicted densities with the proportion of $k$-radius primes observed for primes up to $10^8$ and $k\le10$.

\begin{table}
\[
\begin{array}{|c|ccccc|}\hline
k & 1 & 2 & 3 & 4 & 5 \\ \hline
\text{predicted} & 1.00 & 0.250 & 0.111 & 0.00 & 0.00160 \\
\text{observed} & 1.00 & 0.250 & 0.111 & 0.00 & 0.00161 \\ \hline \hline
k & 6 & 7 & 8 & 9 & 10 \\ \hline
\text{predicted} & 0.00463 & 0.00250 & 0.000977 & 0.000610 & 0.000200 \\
\text{observed} & 0.00464 & 0.00250 & 0.000974 & 0.000600 & 0.000202 \\ \hline
\end{array}
\]
\caption{Predicted and observed (primes up to $10^8$) densities of $k$-radius primes}
\label{tab:densities}
\end{table}

\begin{corollary}\label{C:gaps}
If $f_{spec}(k)>0$, then with $A_k$ as in Theorem \ref{th:density} there is a $k$-radius prime between $n$ and $n+O\bigl(n\log n\exp(-A_k\sqrt{\log n})\bigr)$.
\end{corollary}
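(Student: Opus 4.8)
The plan is to deduce the corollary directly from the asymptotic count of $k$-radius primes in Theorem~\ref{th:density}. Write $\rho_k(x)$ for the number of $k$-radius primes in $[1,x]$. Since $f_{\mathrm{spec}}(k)>0$, the constant $c_k$ of~\eqref{E:density} is strictly positive, and Theorem~\ref{th:density} provides
\[
\rho_k(x)=c_k\frac{x}{\log x}+O\bigl(x\exp(-A_k\sqrt{\log x})\bigr),
\]
with the implied constant depending only on $k$. I will show that for a suitably large constant $C=C(k)$, putting $y=Cn\log n\exp(-A_k\sqrt{\log n})$, one has $\rho_k(n+y)-\rho_k(n)\geq 1$ for all sufficiently large $n$; this says exactly that the interval $(n,n+y]$ contains a $k$-radius prime, which is the assertion of the corollary (the finitely many remaining small $n$ being absorbed into the implied constant).

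First I would check that such a $y$ satisfies $y=o(n)$: writing $u=\sqrt{\log n}$, we have $\log n\exp(-A_k\sqrt{\log n})=u^2e^{-A_ku}\to 0$ as $n\to\infty$, since $A_k>0$. Consequently $\log(n+y)=\log n+O(y/n)=\log n+o(1)$, whence $\log(n+y)\sim\log n$ and moreover $\exp(-A_k\sqrt{\log(n+y)})=(1+o(1))\exp(-A_k\sqrt{\log n})$; this lets me replace $\log(n+y)$ and $\sqrt{\log(n+y)}$ by $\log n$ and $\sqrt{\log n}$ at the cost of factors $1+o(1)$.

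Next I would estimate $\rho_k(n+y)-\rho_k(n)$. The error terms from the two applications of Theorem~\ref{th:density} contribute $O\bigl((n+y)\exp(-A_k\sqrt{\log(n+y)})\bigr)+O\bigl(n\exp(-A_k\sqrt{\log n})\bigr)=O\bigl(n\exp(-A_k\sqrt{\log n})\bigr)$, by the previous paragraph. For the main term, writing $\frac{n+y}{\log(n+y)}-\frac{n}{\log n}=\int_n^{n+y}\frac{\log t-1}{(\log t)^2}\,dt$ and using that the integrand is positive and decreasing for $t$ large, this is at least
\[
y\cdot\frac{\log(n+y)-1}{(\log(n+y))^2}\;\geq\;\frac{y}{2\log n}
\]
for $n$ large. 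Hence there is a constant $C'=C'(k)$ with
\[
\rho_k(n+y)-\rho_k(n)\;\geq\;\frac{c_k}{2}\cdot\frac{y}{\log n}-C'\,n\exp(-A_k\sqrt{\log n})
\]
for all large $n$. Finally, choosing $C$ so that $c_kC/2>C'$ turns the right-hand side into $\bigl(\tfrac{c_kC}{2}-C'\bigr)n\exp(-A_k\sqrt{\log n})$, which tends to $+\infty$ because $n\exp(-A_k\sqrt{\log n})=\exp(\log n-A_k\sqrt{\log n})\to\infty$; in particular it exceeds $1$ once $n$ is large enough, yielding the desired $k$-radius prime in $(n,n+y]$. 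The only delicate point is the bookkeeping of the error terms — in particular checking $y=o(n)$ so that $\log(n+y)$ and $\sqrt{\log(n+y)}$ may be swapped for $\log n$ and $\sqrt{\log n}$ — and this is entirely routine; I do not anticipate a genuine obstacle.
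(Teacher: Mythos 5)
Your proposal is correct and follows essentially the same route as the paper: apply Theorem~\ref{th:density} at $n$ and at $n+y$, observe that the main-term difference is asymptotic to $c_k y/\log n$ while the error is $O\bigl(n\exp(-A_k\sqrt{\log n})\bigr)$, and choose the constant in $y$ large enough that the difference is positive. Your version merely spells out the bookkeeping (the check $y=o(n)$ and the integral lower bound for the main term) that the paper leaves implicit.
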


\begin{proof}
From Theorem \ref{th:density} we have that the number of $k$-radius primes between $n$ and $n+g$ (with $1\le g\le n$) is
\[
c_k\left(\frac{n+g}{\log(n+g)} - \frac{n}{\log n}\right) + O\bigl(n\exp(-A_k\sqrt{\log n})\bigr)\,.
\]
This is strictly positive provided that the main term $c_k\left(\frac{n+g}{\log(n+g)} - \frac{n}{\log n}\right)$ is strictly larger than the absolute value of the error term.  This main term is asymptotic to $c_k g/\log n$ as $n\rightarrow\infty$, so with $g$ larger than some constant multiple of $n\log n\exp(-A_k\sqrt{\log n})$ we are assured of a $k$-radius prime between $n$ and $n+g$.
\end{proof}

\begin{theorem}
\label{thm:strong_k_upper_bound}
Let $k$ be a fixed positive integer such that a special KM-logarithm exists. \emph{(}Recall that a special KM-logarithm exists, in particular, when $2k+1$ is prime.\emph{)} Define the positive constant $A_k$ as in Theorem~\ref{th:density}. Then
\[
f_k(n)=\frac{1}{k}\binom{n}{2}+O\big(n^2\log n \exp(-A_k\sqrt{\log n})\big).
\]
\end{theorem}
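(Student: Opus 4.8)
The plan is to combine the construction for $k$-radius primes (Proposition~\ref{prop:k_radius_prime_construction}), the existence of many $k$-radius primes (Theorem~\ref{th:density}, or more directly the gap result of Corollary~\ref{C:gaps}), and the monotonicity of $f_k$ in the alphabet size (Lemma~\ref{lem:parameter_change}(ii)), exactly in the spirit of how Corollary~\ref{cor:2_radius} was deduced from Theorem~\ref{thm:harman} and Corollary~\ref{cor:2_radius_prime}. The lower bound $f_k(n)\geq \frac{1}{k}\binom{n}{2}$ is immediate from Lemma~\ref{lem:lower_bound}, so the whole content is the matching upper bound with the claimed error term.

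First I would note that since a special KM-logarithm of length $k$ exists, we have $f_{\mathrm{spec}}(k)>0$, so Corollary~\ref{C:gaps} applies: there is a $k$-radius prime $p$ with $n\leq p\leq n+g$, where $g=O\bigl(n\log n\exp(-A_k\sqrt{\log n})\bigr)$. Next, by Lemma~\ref{lem:parameter_change}(ii) we have $f_k(n)\leq f_k(p)$, and by Proposition~\ref{prop:k_radius_prime_construction} we have $f_k(p)=\frac{1}{k}\binom{p}{2}+O(p)$. It then remains to expand $\frac{1}{k}\binom{p}{2}$ around $n$: writing $p=n+e$ with $0\leq e\leq g$, we get
\[
\frac{1}{k}\binom{p}{2}=\frac{1}{k}\binom{n}{2}+O(ne)+O(n)=\frac{1}{k}\binom{n}{2}+O(ng)+O(n),
\]
and since $g\geq 1$ for large $n$ the $O(n)$ terms (both here and the $O(p)=O(n)$ from the Proposition) are absorbed into $O(ng)=O\bigl(n^2\log n\exp(-A_k\sqrt{\log n})\bigr)$. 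Combining these gives $f_k(n)\leq \frac{1}{k}\binom{n}{2}+O\bigl(n^2\log n\exp(-A_k\sqrt{\log n})\bigr)$, and with the lower bound from Lemma~\ref{lem:lower_bound} this is the theorem.

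The only point requiring a little care is checking that the error term from the prime gap genuinely dominates: one must confirm that $n\exp(-A_k\sqrt{\log n})$ eventually exceeds any fixed constant (indeed it tends to infinity, since $\sqrt{\log n}=o(\log n)$), so that the $O(n)$ contributions are legitimately swallowed, and that multiplying the gap $g$ by $n$ (from the derivative of $\binom{p}{2}$) produces precisely the stated shape $n^2\log n\exp(-A_k\sqrt{\log n})$. I do not expect any real obstacle here — this is a routine assembly of results already proved in the paper, with the substantive work (the density of $k$-radius primes via Elliott's theorem) having been done in Theorem~\ref{th:density}. If one prefers to avoid invoking Corollary~\ref{C:gaps} directly, the same gap estimate can be re-derived inline from Theorem~\ref{th:density} by the argument given in the proof of that corollary: the main term $c_k\bigl(\tfrac{n+g}{\log(n+g)}-\tfrac{n}{\log n}\bigr)\sim c_k g/\log n$ beats the error term $O\bigl(n\exp(-A_k\sqrt{\log n})\bigr)$ once $g$ is a large enough multiple of $n\log n\exp(-A_k\sqrt{\log n})$.
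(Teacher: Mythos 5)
Your proposal is correct and follows essentially the same route as the paper's proof: pick the least $k$-radius prime $p\geq n$ via Corollary~\ref{C:gaps}, apply Proposition~\ref{prop:k_radius_prime_construction} and the monotonicity $f_k(n)\leq f_k(p)$, and expand $\binom{p}{2}$ about $n$ to absorb everything into $O(ng)$. Your extra check that $n\exp(-A_k\sqrt{\log n})\to\infty$ (so the $O(n)$ terms are swallowed) is a detail the paper leaves implicit, but nothing differs in substance.
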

We remark that this improves the bound of Theorem~\ref{thm:k_radius} for those values of~$k$ where a special KM-logarithm exists, since $\exp(A_k\sqrt{\log n})$ grows faster than $(\log n)^2$.
\begin{proof}
Let $p$ be the smallest $k$-radius prime such that $n\leq p$. Then $p=n+g$ where $g=O\bigl(n\log n\exp(-A_k\sqrt{\log n})\bigr)$, by Corollary~\ref{C:gaps}. Proposition~\ref{prop:k_radius_prime_construction} implies that $f_k(p)=\frac{1}{k}\binom{p}{2}+O(p)$ and so
\[
f_k(n)\leq f_k(p)=\frac{1}{k}\binom{p}{2}+O(p)=\frac{1}{k}\binom{n}{2}+O(ng),
\]
as required.
\end{proof}

\section{Computational results}\label{sec:computations}

In this section, we present our computational results. We begin by describing the techniques we used to design an efficient search for logarithms, and to enumerate them. We then describe the results themselves.

\subsection{Finding logarithms}

Let $q_1<\cdots<q_r$ be the primes in $\{1,\dots,k\}$.  A logarithmic function $f$ of length $k$ is uniquely determined by the values of $f(q_1)$, \dots, $f(q_r)$, which can be chosen independently, so there are $k^{\pi(k)} = k^r$ of them.

Let $V = {\bZ_k}^k$ be the set of all vectors of length $k$, with components in $\bZ_k$.  We can conveniently represent any logarithmic function $f$ of length $k$ as the element $\mathbf{v}_f=\bigl(f(1),f(2),\ldots,f(k)\bigr)\in V$.  For each prime $q_i\leq k$, define ${\bf e}_i\in V$ by letting the $j$th component of ${\bf e}_i$ be the exponent of the largest power of $q_i$ dividing $j$. Then $f$ is a logarithmic function with $f(q_i)=a_i$ for $i\in\{1,2,\ldots,r\}$ if and only if $\mathbf{v}_f=\sum_{i=1}^r a_i\mathbf{e}_i$. So the set of logarithmic functions is a $\bZ_k$-submodule of $V$, with generating set $\mathbf{e}_1,\mathbf{e}_2,\ldots,\mathbf{e}_r$. In what follows, we identify a logarithmic function $f$ with its representation $\mathbf{v}_f$.

Logarithms are bijective logarithmic functions, so a naive strategy would be to search over all $k^r$ logarithmic functions. Forcade and Pollington~\cite{ForcadePollington} observed that it is sufficient to search over all possibilities for $f(q_i)$ with $q_i\leq k/2$. Once these values are fixed, $f$ is determined on all $(k/2)$-smooth numbers in $\{1,2,\ldots,k\}$. If the resulting partial function is injective with image $X$, then $f$ may always be completed to a logarithm by assigning the remaining values of $f(q_i)$ for $q_i>k/2$ to be an arbitrary permutation of $\bZ_k\setminus X$. This trick reduces the size of our search from $k^{\pi(k)}$ to $k^{\pi(k/2)}$. Another way of thinking of this is as follows. We partition the set of logarithmic functions into parts; two functions are in the same part if and only if they agree on $q_i$ for $q_i\leq k/2$. We search over parts rather than logarithmic functions. Each part contains $\bigl(\pi(k)-\pi(k/2)\bigr)!$ logarithms if the appropriate partial function is injective, otherwise the part contains no logarithms. In our computations, we made use of a generalisation of this, which we now describe.

Partition the primes $q_1$, \dots, $q_r$ into \emph{blocks} $B_1$, \dots, $B_t$ in following way.  Any prime $\leq\sqrt{k}$ appears on its own in a block of size $1$.  Primes $q$ in the interval $(\sqrt{k},k]$ are grouped together in blocks according to the value of $\ell$ such that $k/(\ell+1)<q\leq k/\ell$.  Observe that if $\pi$ is a permutation of $\{1,\dots,r\}$ that induces a permutation of each block, and $f = \sum_{i=1}^r a_i {\bf e}_i$ is a logarithm, then so is $\sum_{i=1}^r a_{\pi(i)} {\bf e}_i$.  Logarithms that are related in this way we call \emph{combinatorially equivalent}.

If $f$ is a logarithm of length $k$, then so is $af$ for any $a\in \bZ_k^*$.  Two logarithms $f$ and $g$ will be called \emph{linearly equivalent} if $f=ag$ for some $a\in \bZ_k^*$.
Two logarithms $f$ and $g$ will be deemed \emph{equivalent} if there is a logarithm $h$ such that $f$ is combinatorially equivalent to $h$ and $h$ is linearly equivalent to $g$.  For finding and counting logarithms it is extremely useful to work up to equivalence.

We remark that a small modification of this technique works when counting KM-logarithms or special KM-logarithms. If $f$ is a KM-logarithm (or a special KM-logarithm), then so is $af$ for any $a\in \bZ_k^*$, for if $k$ is odd then there are no parity conditions to check, whilst if $k$ is even then $a$ must be odd, and the parity of $af(j)$ (reduced modulo even $k$) is the same as the parity of $f(j)$. When dealing with special KM-logarithms or KM-logarithms, any primes dividing $k$ must be put in singleton blocks.  Since for simple logarithms the only primes $q$ in blocks of size greater than $1$ satisfy $q>\sqrt{k}$, there is at most one block that is affected by this change, having one of its primes removed to a singleton block.

Given $k$, our search for a logarithm $f$ proceeds by attempting to assign values for $f(q_1)$, \dots, $f(q_r)$ in that order.  Having found compatible values for $f(q_1)$, \dots, $f(q_{j-1})$ and having computed ${\bf v}_{j-1} = \sum_{i\leq j-1} f(q_i){\bf e}_i$, we consider each possibility for $f(q_j)$ in turn: the vector ${\bf v}_j = {\bf v}_{j-1} + f(q_j){\bf e}_j$ is computed; the $q_j$-smooth components of ${\bf v}_j$ are extracted (using a precomputed list of $q_j$-smooth numbers), sorted, and checked for repeats; if a repeat is found then the next value of $f(q_j)$ is tried, backtracking once all possibilities are exhausted; if no repeat is found then $j$ is increased.

The backtracking makes use of equivalence.  Within each block $\{q_{i_1},\dots,q_{i_t}\}$ of primes, we constrain the values of the logarithms to satisfy $f(q_{i_1})<\cdots<f(q_{i_t})$.  Linear equivalence is exploited by insisting that $f(2)\mid k$, and restricting values of $f(3)$ so that there does not exist $a$ with $\gcd(a,k)=1$ and $af(2)\equiv f(2) \bmod{k}$ such that $af(3)\bmod{k}$ is smaller than $f(3)$.

For computing KM-logarithms and special KM-logarithms, the parity conditions are checked on $q_j$-smooth numbers before accepting the legitimacy of $f(q_j)$.

\subsection{Counting logarithms}

To count the total number of logarithms (or KM-logarithms, or special KM-logarithms) of length $k$, we need to know the size of each equivalence class, and how many representatives of each class appear in the output of our algorithm (now not terminating when a logarithm is found, but continuing to search the entire space of logarithmic functions).  If $f$ is a logarithm, scaled so that $f(2)\mid k$, then the number of $a\in\bZ_k^*$ such that $af(2)\equiv f(2)\bmod{k}$ equals the number of $a\in\bZ_k^*$ with $a\equiv 1 \bmod{(k/f(2))}$, which is the size of the kernel of the natural homomorphism $\bZ_k^*\rightarrow \bZ_{k/f(2)}^*$, namely $\varphi(k)/\varphi\bigl(k/f(2)\bigr)$.  To ease the counting, we remove the constraint on $f(3)$, and check for each logarithm that the vector of logarithm values for the primes in singleton blocks scales to precisely $\varphi(k)/\varphi\bigl(k/f(2)\bigr)$ distinct vectors on multiplication by elements of $\bZ_k^*$.  If the primes below $k$ fall into blocks of lengths $\ell_1$, \dots, $\ell_t$, then each logarithm $f$ produced by our algorithm contributes
$
\varphi\bigl(k/f(2)\bigr)\prod_{j=1}^t \ell_j !
$
to the total number of logarithms.

\subsection{Results and comments}

We write $f_{\mathrm{log}}(k)$ for the number of logarithms of length $k$, and $f_{\mathrm{spec}}(k)$ for the number of special KM-logarithms. Table~\ref{tab:log_arr_count} shows $f_{\mathrm{log}}(k)$  and $f_{\mathrm{spec}}(k)$ for small values of $k$.
\begin{table}
\begin{small}
\[
\begin{array}{|c|ccccccc|}\hline
n&1&2&3&4&5&6&7\\\hline
f_{\mathrm{log}}(k)&1&1&2&2&8&10&36\\
f_{\mathrm{spec}}(k)&1&1&2&0&8&2&36\\\hline\hline
n&8&9&10&11&12&13&14\\\hline
f_{\mathrm{log}}(k)&40&24&20&140&136&936&624\\
f_{\mathrm{spec}}(k)&16&24&8&140&0&936&312\\\hline\hline
n&15&16&17&18&19&20&21\\\hline
f_{\mathrm{log}}(k)&416&256&3\,648&2\,088&30\,240&16\,704&9\,792\\
f_{\mathrm{spec}}(k)&416&96&3\,648&576&30\,240&4\,608&9\,792\\\hline\hline
n&22&23&24&25&26&27&28\\\hline
f_{\mathrm{log}}(k)&9\,000&103\,488&86\,400&72\,960&36\,576&22\,896&12\,096\\
f_{\mathrm{spec}}(k)&3\,360&103\,488&10\,368&72\,960&13\,752&22\,896&1,296\\\hline\hline
n&29&30&31&32&33&34&35\\\hline
f_{\mathrm{log}}(k)&134\,400&105\,216&2\,671\,200&1\,708\,800&794\,400&396\,288&145\,152\\
f_{\mathrm{spec}}(k)&134\,400&23\,424&2\,671\,200&556\,800&794\,400&202\,752&145\,152\\\hline\hline
n&36&37&38&39&40&41&42\\\hline
f_{\mathrm{log}}(k)&109\,440&3\,594\,240&2\,244\,672&1\,202\,688&660\,480&17\,606\,400&16\,104\,960\\
f_{\mathrm{spec}}(k)&7\,488&3\,594\,240&1\,013\,472&1\,202\,688&102\,912&17\,606\,400&2\,021\,760\\\hline
\end{array}
\]
\caption{The number of logarithms and special KM-logarithms of length $k$}
\label{tab:log_arr_count}
\end{small}
\end{table}

Table~\ref{tab:log_arr_count} shows that logarithms of length $k$ are common when $k$ is small. This entire table took just a few minutes to compute.  However, logarithms of length $k$ do not always exist: Forcade and Pollington~\cite{ForcadePollington} compute that there are no logarithms of length $195$, but that logarithms of length $k$ exist
for all other values of $k$ with $k\leq 204$. Their computations also show that there exists no logarithm of length $255$.

We pushed the computations for finding logarithms, KM-logarithms, and special KM-logarithms up to $k=300$. There is no logarithm of length $k$ for $k=195$, $205$, $208$, $211$, $212$, $214$, $217$, $218$, $220$, $227$, $229$, $235$, $242$, $244$, $246$, $247$, $248$, $252$, $253$, $255$, $257$, $258$, $259$, $263$, $264$, $265$, $266$, $267$, $269$, $271$, $274$, $275$, $279$, $283$, $286$, $287$, $289$, $290$, $291$, $294$, $295$, $297$, $298$.  For all other $k\le 300$ there exists a logarithm of length $k$.  In addition to the above list, there is no KM-logarithm of length $k$ for $k=184$, $234$, $236$.  In addition to all of these, there is no special KM-logarithm for $k=4$, $12$, $60$, $180$, $182$, $190$, $196$, $222$, $238$, $268$, $276$, $282$, $292$.  When no logarithm existed, the computations for larger values of $k$ typically took about $50$ hours each, using PARI/GP \cite{pari} on a laptop, with the worst case by far being $k=294$ (which took about $150$ hours).

Evidence from the enumeration in Table~\ref{tab:log_arr_count}, together with heuristic combinatorial considerations, seem to point towards logarithms of length $k$ being common when $k$ is prime. However, this is not the case in general:
the smallest prime $k$ for which there is no logarithm is $k=211$.

The largest known $k$ for which there exists a logarithm without either $k+1$ or $2k+1$ being prime is $k=277$. For $k$ above $200$, these `sporadic' logarithms appear to be increasingly rare: $k=201$, $202$, $203$, $206$, $207$, $213$, $223$, $225$, $234$, $236$, $237$, $241$, $272$, $277$. (Could it possibly be the case that the only large values of $k$ such that logarithms exist are when $k+1$ or $2k+1$ is prime?) To illustrate our output, we present the values of a logarithm of length $277$: it is enough to give the values at primes below $k/2$ (which determine the values at all $(k/2)$-smooth numbers up to $k$), as the values at remaining primes below $k$ may be assigned the missing logarithm values in any order.  The bold numbers in Table \ref{tab:log278} indicate the start of each of the blocks of primes; within each block, the logarithm values are seen to increase.

\begin{table}
\[
\begin{array}{|c||c|c|c|c|c|c|c|c|c|c|c|c|}\hline
p & \bf 2 & \bf 3 & \bf 5 & \bf 7 & \bf 11 & \bf 13 & \bf 17 & \bf 19 & \bf 23 & \bf 29  & \bf 31 & \bf 37 \\ \hline
f(p) & 1 & 122 & 29 & 100 & 48 & 108 & 113 & 239 & 190 & 166 & 176 & 64 \\  \hline \multicolumn{13}{c} { } \\ \hline
p &  \bf 41 & 43 & \bf 47 &  53 &\bf 59 &  61 & 67 & \bf 71 & 73 & 79 &  83 & 89 \\ \hline
f(p) & 18 & 134 & 145 & 226 & 38 & 72 & 81 & 43 & 182 & 196 & 249 & 253  \\  \hline \multicolumn{13}{c} { } \\ \cline{1-10}
p & \bf 97 & 101 & 103 & 107 & 109 & 113  & 127 & 131 & 137  \\ \cline{1-10}
f(p) & 23 & 54 & 120 & 184 & 217 & 233 & 260 & 262 & 271  \\ \cline{1-10}
\end{array}
\]
\caption{A logarithm of length $277$}
\label{tab:log278}
\end{table}

Forcade and Pollington remarked in \cite{ForcadePollington} that whenever they had found a logarithm $f$ of length $k$, they had managed to find one with $f(2)=1$.  In fact there are many exceptions to this.  For $k<288$,  the exceptional values are $k=176$, $198$, $200$, $215$, $216$, $219$, $224$, $225$, $232$, $234$, $240$, $249$, $250$, $256$, $260$, $262$, $276$, $280$, $282$: for these values of $k$ there exist logarithms of length $k$, but none with $f(2)=1$.

\section{Open problems}
\label{sec:open}

The big open question\footnote{
Recent work by the first author has used probabilistic methods to prove that this limit exists and equals $1/k$.  However the problem of explicitly constructing asymptotically good $k$-radius sequences for all $k$ remains open.
} for the asymptotics of $k$-radius sequences is the following:

\begin{open}
For any fixed $k$, determine $\lim_{n\rightarrow\infty} f_k(n)/\binom{n}{2}$ (if it exists).
\end{open}
The results in this paper show that $1/k\leq f_k(n)/\binom{n}{2}\leq 1/\bigl(k+o(k)\bigr)$ for sufficiently large $n$, but is it the case that $\lim_{n\rightarrow\infty} f_k(n)/\binom{n}{2}= 1/k$?
In particular, is it the case that $\lim_{n\rightarrow\infty} f_{195}(n)/\binom{n}{2} = 1/195$?
Our results say that this limit (if it exists) lies in the interval $[1/195,1/194]$. It is not difficult to see that whenever there exists a covering of $\bZ^{\pi(195)}$ by translated copies of ${\mathcal{C}}_{195}$ with density $1+\epsilon$, then the limit is at most $(1+\epsilon)/195$. So one approach to determining this limit might be to find good coverings of $\bZ^{\pi(195)}$ by copies of ${\mathcal{C}}_{195}$.

\begin{open}
Determine the set of integers $k$ such that there exists a logarithm of length $k$.
\end{open}
Is it the case that for all sufficiently large $k$, logarithms exist if and only if either $k+1$ or $2k+1$ is prime?
If this is not true, is it the case that the set of integers $k$ such that there exists a logarithm of length $k$ has density zero? Our computer searches for logarithms suggest this is very possible. In particular, is there an infinite sequence of integers $k$ such that no logarithms of length $k$ exist? We believe this is very likely, and give a heuristic argument below.

Let $M_k$ be the largest size of the image of a logarithmic function from $\{1,\dots,k\}$ to $\bZ_k$, and let $R_k\le k$ be maximal such that there exists a logarithmic function from $\{1,\dots,k\}$ to $\bZ_k$ whose restriction to $R_k$-smooth elements is injective.  Thus a logarithm exists if and and only if $M_k=R_k=k$, and when a logarithm does not exist the numbers $M_k$ and $R_k$ give natural measures of failure.

\begin{open} Investigate $M_k$ and $R_k$.
\end{open}

We have that $\limsup_{k\rightarrow\infty}M_k/k = \limsup_{k\rightarrow\infty}R_k/k=1$.
Is it the case that $\liminf_{k\rightarrow\infty}M_k/k = \liminf_{k\rightarrow\infty}R_k/k=0$?
We give a heuristic argument for $R_k$ (with thanks to Carl Pomerance for suggesting this line of reasoning).
Fix small $\epsilon>0$.
Let $\psi(x,y)$ denote the number of $y$-smooth numbers below $x$.
With $u = \log x/\log y$
we shall use repeatedly the estimate $\psi(x,y) = x u^{-u+o(u)}$, uniformly as $u\rightarrow\infty$ with $u<(1-\epsilon)\log x / \log\log x$ \cite{CanfieldErdosPomerance}.
Let the $r$th prime be $q_r$, and for a logarithm $f$ of length $k$, let $f_r$ be the restriction of $f$ to the $q_r$-smooth numbers between $1$ and $k$.
Suppose we have a possibility for $f_r$, with $q_r\approx \exp(\sqrt{\log k})$.
We have therefore used up $\psi(k,q_r)$ of the elements of $\bZ_k$ as values of $f$ on $q_r$-smooth numbers.
If we assume suitably random behaviour, the expected number of possibilities for $f_{r+1}$ that extend $f_r$ is bounded above by
\[
k\bigl(1-\psi(k,q_r)/k\bigr)^{\psi(k/q_{r+1},q_{r+1})}\,,
\]
since there are $\psi(k/q_{r+1},q_{r+1})$ numbers between $1$ and $k$ that are $q_{r+1}$-smooth but not $q_r$-smooth.
Now with $u=\log k/\log q_r \sim \sqrt{\log k}$ we have $\psi(k,q_r)= ku^{-u+o(u)}\ge k^{1-\epsilon}$ for all large enough $k$; and with $v = \log(k/q_{r+1})/\sqrt{\log{k}}\sim\sqrt{\log{k}}$ we have $\psi(k/q_{r+1},q_{r+1}) = kv^{-v+o(v)}/q_{r+1}\ge k^{1-\epsilon}$ for all large enough $k$.
Thus a heuristic upper bound for the expected number of possibilities for $f_{r+1}$ (given $f_r$) is  $k(1-k^{-\epsilon})^{k^{1-\epsilon}}$.
Since there are $O(k^r) = O(k^{\exp(\sqrt{\log k})})$ possibilities for $f_r$,
our heuristic bound for the number of possibilities for $f$, summed over all $k\ge k_0$, tends to $0$ as $k_0\rightarrow\infty$.
Without some structural reason (such as $k+1$ being prime), this suggests that only finitely many $k$ would achieve $R_k$ as large as $\exp(\sqrt{\log k})$ (and indeed this bound could heuristically be reduced still further).

Computations show that $M_k=k$ for $k\le 194$, and we computed further that $M_k\ge k-1$ for $k\le 257$, but that $M_{258}= 256$.

\section*{Acknowledgments}
Our interest in this topic was kindled by an inspirational talk by Yeow Meng Chee at Royal Holloway, University of London, in March 2010.  We are grateful to Glyn Harman for providing us with just the result that we needed on gaps between primes in arithmetic progressions, and thank Glyn Harman, Carl Pomerance and Igor Shparlinski for other helpful comments.
Finally we thank the referee for a careful reading of the paper and for several helpful suggestions.

\end{document}